\newtheorem{example}{Example}
\newtheorem{Theo}{Theorem}
\newtheorem{Rem}{Remark}
\newtheorem{Lem}{Lemma}
\newtheorem{Coro}{Corollary}
\numberwithin{equation}{section}
\numberwithin{Lem}{section}
\numberwithin{Defi}{section}
\numberwithin{Theo}{section}
\numberwithin{Pro}{section}
\numberwithin{Rem}{section}
\numberwithin{Coro}{section}
\numberwithin{Fig}{section}
\begin{document}

\title[SParareal algorithm for SDEs ]{Stochastic Parareal Algorithm for Stochastic Differential Equations }
%\title[Short title]{Mean square stability of STM for SDEs driven by fractional Brownian motion}
%\shorttitle{Mean square stability of STM for SDEs driven by fractional Brownian motion}
%%=============================================================%%
%% Prefix	-> \pfx{Dr}
%% GivenName	-> \fnm{Joergen W.}
%% Particle	-> \spfx{van der} -> surname prefix
%% FamilyName	-> \sur{Ploeg}
%% Suffix	-> \sfx{IV}
%% NatureName	-> \tanm{Poet Laureate} -> Title after name
%% Degrees	-> \dgr{MSc, PhD}
%% \author*[1,2]{\pfx{Dr} \fnm{Joergen W.} \spfx{van der} \sur{Ploeg} \sfx{IV} \tanm{Poet Laureate} 
%%                 \dgr{MSc, PhD}}\email{iauthor@gmail.com}
%%=============================================================%%

\author[1]{\fnm{Huanxin} \sur{Wang}}\email{wanghx02022@163.com}

\author[2]{\fnm{Junhan} \sur{LYU}}\email{23213078@life.hkbu.edu.hk}

\author[3]{\fnm{Zicheng} \sur{Peng}}\email{1220012523@student.must.edu.mo}

 %,, , China, 1220012523@student.must.edu.mo

\author*[1]{\fnm{Min} \sur{Li}}\email{liminmaths@163.com}
%\equalcont{These authors contributed equally to this work.}

%\author*[3]{\fnm{Xiong} \sur{Wang}}\email{xiongwang@ualberta.ca}

%\equalcont{These authors contributed equally to this work.}
\affil[1]{\orgdiv{School of Mathematics and Physics}, \orgname{China University of Geosciences}, \orgaddress{\city{Wuhan}, \postcode{430074},  \country{China}}}

\affil[2]{\orgdiv{School of Business}, \orgname{Hong Kong Baptist University}, \orgaddress{\city{Kowloon Tong}, \postcode{Hong Kong  SAR},  \country{China}}}

\affil[3]{\orgdiv{School of Business}, \orgname{Macau University of Science and Technology}, \orgaddress{\city{ Macau}, \postcode{999078},  \country{China}}}

\affil*[1]{\orgdiv{School of Mathematics and Physics \emph{and} Center for Mathematical Sciences}, \orgname{China University of Geosciences}, \orgaddress{\city{Wuhan}, \postcode{430074}, \country{China}}}

%Department of Mathematical and Statistical Sciences, University of Alberta,  Edmonton\\ T6G 2G1, Canada
%%==================================%%
%% sample for unstructured abstract %%
%%==================================%%

\abstract{
 This paper analyzes  the SParareal algorithm for stochastic differential equations (SDEs). Compared to the classical Parareal algorithm, the SParareal algorithm accelerates convergence by introducing stochastic perturbations, achieving linear convergence over unbounded time intervals. We first revisit the classical Parareal algorithm and stochastic  Parareal algorithm. Then we investigate mean-square stability of the SParareal algorithm based on the stochastic $\theta$-method for SDEs, deriving linear error bounds under four sampling rules. Numerical experiments demonstrate the superiority of the SParareal algorithm in solving both linear and nonlinear SDEs,  reducing the number of iterations required compared to the classical Parareal algorithm. 
}

%%================================%%
%% Sample for structured abstract %%
%%================================%%

\keywords{SParareal algorithm; Parareal algorithm; Stochastic differential equations; Mean-square stability; }

%%\pacs[JEL Classification]{D8, H51}

%%\pacs[MSC Classification]{35A01, 65L10, 65L12, 65L20, 65L70}

\maketitle

\section{Introduction}
% The advent of large-scale parallel computing has revolutionized computational sciences, enabling high-resolution simulations across various disciplines such as physics, astronomy, meteorology, biology, and finance. This progress has significantly increased the demand for efficient methods to solve time-dependent problems. Leveraging multi-core processors, distributed computing, and other parallel architectures, researchers have sought ways to accelerate these computations. In particular, time parallelization has emerged as a promising strategy to tackle such complex problems, with various approaches gaining attention.
The rise of large-scale parallel computing has revolutionized computational sciences, enabling high-resolution simulations across diverse fields such as physics, astronomy, meteorology, biology, and finance. This advancement has intensified the demand for efficient methods to solve time-dependent problems. By leveraging multi-core processors and distributed architectures, time parallelization has emerged as a promising strategy to accelerate computations for complex systems.

% Among the principal time-parallel methods, the Parareal algorithm stands out due to its compelling combination of rapid convergence, computational efficiency, and ease of implementation. First introduced by Lions, Maday, and Turini in 2001 \cite{2001Traffic}, Parareal decomposes the time domain into discrete intervals and computes the solution in parallel by combining coarse (low-resolution) and fine (high-resolution) approximations. This approach reduces the computational complexity of solving large systems of time-dependent ordinary differential equations (ODEs), making it particularly valuable for high-performance computing applications.

Among the principal time-parallel methods, the Parareal algorithm has gained significant attention for its rapid convergence, computational efficiency, and ease of implementation. First introduced by Lions, Maday, and Turini in 2001 \cite{2001Traffic}, the Parareal algorithm decomposes the time domain into discrete intervals, computing the solution in parallel by coupling coarse (low-resolution) and fine (high-resolution) approximations. This strategy reduces the computational burden of solving large systems of time-dependent ordinary differential equations (ODEs), making it particularly suitable for high-performance computing applications.

% Substantial theoretical and numerical analyses have been conducted on the Parareal algorithm. For example, Gander and Vandewalle \cite{2007Traffic} demonstrated that Parareal can be viewed as a variant of the multiple shooting method, establishing its superlinear convergence for bounded time intervals and linear convergence for unbounded ones. Further studies on its stability and convergence have extended these results to more complex problem classes \cite{2005Traffic, 2007Traffic}. Its versatility has led to applications in fluid-structure interaction \cite{2010Time}, Navier-Stokes equations \cite{2005A}, and reservoir simulations \cite{2005B}, among others.

Extensive theoretical and numerical analyses have been conducted on the Parareal algorithm. For instance, Gander and Vandewalle \cite{2007Traffic} demonstrated that Parareal can be interpreted as a variant of the multiple shooting method, showing superlinear convergence on bounded time intervals and linear convergence on unbounded ones. Subsequent studies have further explored its stability and convergence properties, extending its applicability to complex problem classes \cite{2005Traffic, 2007Traffic}. The algorithm has been successfully applied in various fields, including fluid-structure interaction \cite{2010Time}, Navier-Stokes equations \cite{2005A}, and reservoir simulations \cite{2005B}.

% Despite its success, the Parareal algorithm faces challenges related to the number of iterations required to achieve convergence. Recent work has focused on improving this aspect. Pentland et al. \cite{2021Traffic} proposed a stochastic extension of the algorithm, known as the stochastic Parareal (SParareal), which samples initial values from dynamically varying probability distributions across time intervals. This probabilistic approach has shown that, as the number of samples increases, the algorithm converges faster than the classical Parareal method for various ODE systems. In follow-up work, Pentland et al. \cite{2022Traffic} provided theoretical bounds on the mean square errors for the SParareal algorithm, validating its improved performance with both linear and nonlinear ODEs.

Despite its wide applicability, the Parareal algorithm faces challenges, particularly in the number of iterations required to achieve convergence. To address this limitation, recent studies have proposed stochastic extensions to enhance its performance. Pentland et al. \cite{2021Traffic} introduced the stochastic Parareal (SParareal) algorithm, which samples initial values from dynamically evolving probability distributions across time intervals. Their results show that increasing the number of samples accelerates convergence compared to the classical Parareal algorithm for various ODE systems. In follow-up work, Pentland et al. \cite{2022Traffic} established theoretical bounds on the mean square errors of the SParareal algorithm, further validating its improved performance for both linear and nonlinear ODEs.

% The Parareal algorithm's application to stochastic differential equations (SDEs) is a relatively recent development. In this context, the work by Wu et al. \cite{Wu2011} on the Parareal algorithm's mean square stability analysis for SDEs provides inspiration for our research. They extended the Parareal algorithm to stochastic differential equations and analyzed its stability. Building on their approach, we focus on further enhancing the Parareal framework by applying the SParareal algorithm to SDEs. We utilize the stochastic $\theta$-method for integration and provide a theoretical stability analysis.

The application of the Parareal framework to stochastic differential equations (SDEs) is a relatively recent development. Wu et al. \cite{Wu2011} extended the Parareal algorithm to SDEs and analyzed its mean square stability, providing inspiration for our research. Building on their work, this study aims to enhance the Parareal framework by introducing the SParareal algorithm for SDEs. Using the stochastic \(\theta\)-method for numerical integration, we provide a theoretical analysis of the algorithm's convergence properties.

% Although several studies have adapted the Parareal framework to SDEs, including those addressing numerical aspects \cite{doi:10.1137/080733723, Engblom2008Parallel, SUBBER2018190}, this work aims to extend the concept by introducing the stochastic Parareal (SParareal) method and thoroughly investigating its theoretical properties.

While prior research has adapted the Parareal framework to SDEs and explored numerical aspects \cite{doi:10.1137/080733723, Engblom2008Parallel, SUBBER2018190}, our work seeks to extend this approach by introducing and rigorously analyzing the stochastic Parareal (SParareal) method. Specifically, we investigate its theoretical properties, focusing on convergence.

% The remainder of this paper is structured as follows: Section 2 provides a detailed description of the SParareal algorithm and its sampling strategy, laying the groundwork for the subsequent analysis. Section 3 investigates the mean square stability of the SParareal method when applied to SDEs. In Section 4, we present numerical experiments that validate our theoretical findings, focusing on the uncorrelated case for simplicity.

The remainder of this paper is organized as follows: Section 2 introduces the SParareal algorithm and its sampling strategy. Section 3 investigates the mean square convergence of the SParareal method when applied to SDEs. In Section 4, numerical experiments are conducted to validate the theoretical results, with a particular focus on demonstrating the advantages of the stochastic Parareal algorithm over the classical Parareal algorithm.

\section{Preparation}
Consider the following stochastic differential equation (SDE):
\begin{equation}
    du(t) = f(u(t)) \, dt + g(u(t)) \, dW(t),
    \label{2.1}
\end{equation}
where $f(u(t))$ and $g(u(t))$ are given functions. We define $u(t)$ as a solution to this SDE. In this context, $f(u(t))$ is the drift term and $g(u(t))$ is the diffusion term, with $t \in [0, T]$ and $u_0$ representing the initial condition at $t = 0$.

\subsection{The Parareal Algorithm}

The classical Parareal algorithm {(referred to as $P$)} provides a numerical solution to equation \eqref{2.1} using two distinct propagation operators, denoted as $\mathcal{G}_{\Delta T}$ and $\mathcal{F}_{\Delta t}$. The operator $\mathcal{G}_{\Delta T}$ is the coarse-step propagation operator, typically implemented using a low-order, computationally inexpensive method, while $\mathcal{F}_{\Delta t}$ is the fine-step propagation operator, generally implemented using a higher-order, more computationally expensive method.

The numerical formulation of the Parareal algorithm is provided as follows: First, we partition the entire time interval $[0, T]$ into $N$ subintervals, each denoted by $[T_n, T_{n+1}]$, for $n = 0, 1, \dots, N-1$. We assume all subintervals have uniform size, i.e., $\delta T = \frac{T}{N} = T_{n+1} - T_n$. Next, entire time interval $[0, T]$ is subdivided into smaller subintervals as follows:
\begin{itemize}
    \item entire time interval $[0, T]$ is divided into $N_g$ equal parts, each of size $\Delta T = \frac{T}{N_g}$.
    \item entire time interval $[0, T]$ is divided into $N_f$ equal parts, with each subinterval having size $\Delta t = \frac{T}{N_f}$.
    \item {$N_f$ and $N_g$ is a multiple of $N$, let $M=\frac{N_f}{N_g}=\frac{\Delta T}{\Delta t}$.}
\end{itemize}

At time $T_n$, the numerical solution after $k$ iterations is denoted as $U^k_n$. The Parareal scheme, based on the definitions of the two numerical propagation operators $\mathcal{G}_{\Delta T}$ and $\mathcal{F}_{\Delta t}$, is given by:
    \begin{align}
        & U^0_0 = u_0, \\
        & U^0_{n+1} = \mathcal{G}_{\Delta T}(U_n^0), \quad 0 \leq n \leq N-1, \\
        & U^{k+1}_{n+1} = \mathcal{G}_{\Delta T}(U_n^{k+1}) + \mathcal{F}_{\Delta t}(U_n^k) - \mathcal{G}_{\Delta T}(U_n^k), \quad 0 \leq n \leq N-1.
        \label{q1}
    \end{align}
\subsection{The SParareal Algorithm}

The SParareal algorithm {(referred to as $P_s$)} extends the classical Parareal algorithm ($P$) by introducing perturbations (i.e., noise) into the numerical solutions, reducing the number of iterations needed for convergence. The core idea of the SParareal algorithm is to generate $m$ samples for each unresolved subinterval $T_n$ within the neighborhood $U^k_n$ of the current predicted correction solution, based on a given probability distribution. These samples are propagated in parallel using the fine-step propagation operator $\mathcal{F}_{\Delta t}$. The SParareal algorithm can be described as follows: 
\begin{figure}[htbp]
    \centering
    \includegraphics[width=1\linewidth]{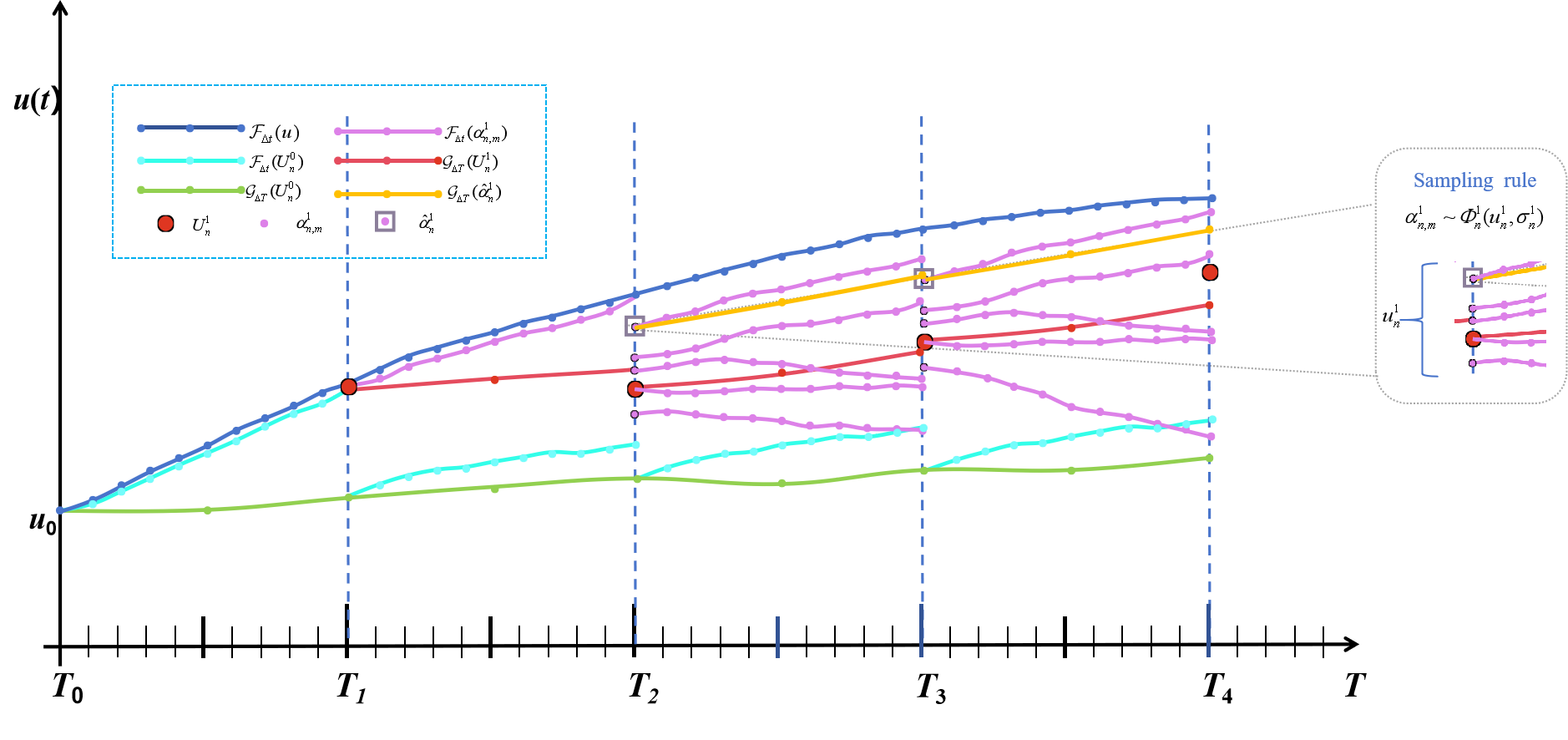}
    \captionsetup{font={footnotesize}}
    \caption{Illustration of the sampling and propagation process of the SParareal algorithm after $k = 1$. The fine solution is represented by the blue line,  the coarse solution at $k = 0$ by the green line, the fine solution at $k = 0$ by the cyan line, the coarse solution at $k = 1$ by the red line, and the predicted corrected solution at $k = 1$ by red dots. For $m = 5$, four samples $\alpha_{n,m}^1$ (purple dots) are drawn from distributions with means $U^1_2$ and $U^1_3$, and some finite standard deviations respectively. The best-selected sample $\widehat{\alpha}^1_n$ is then propagated using $\mathcal{G}_{\Delta T}$ (orange-yellow line).}
     \label{picture1}
\end{figure}

    At time $T_n$, the numerical solution obtained after $k$ iterations is denoted as $U^k_n$. The SParareal scheme is given by the following equations:
    \begin{align}
        & U^0_0 = u_0, \\
        & U^0_{n+1} = \mathcal{G}_{\Delta T}(U_n^0), \quad 0 \leq n \leq N-1, \\
        & U^1_{n+1} = \mathcal{G}_{\Delta T}(U_n^1) + \mathcal{F}_{\Delta t}(U_n^0) - \mathcal{G}_{\Delta T}(U_n^0), \quad 0 \leq n \leq N-1, \\
        & U^{k+1}_{n+1} = \mathcal{G}_{\Delta T}(U^{k+1}_n) + \mathcal{F}_{\Delta t}(U^k_n) - \mathcal{G}_{\Delta T}(U^k_n) + \xi^k_n(U^k_n), \quad 1 \leq k \leq n \leq N-1.
        \label{2.5}
    \end{align}
    Here, $\xi^k_n(U^k_n)$ represents a random perturbation term, which is given by:
    \begin{equation}
        \xi^k_n(U^k_n) = (\mathcal{F}_{\Delta t}(\alpha^k_n) - \mathcal{G}_{\Delta T}(\alpha^k_n)) - (\mathcal{F}_{\Delta t}(U^k_n) - \mathcal{G}_{\Delta T}(U^k_n)).
    \end{equation}
    The term $\alpha^k_n$ represents the stochastic perturbation, introduced by the random variable. Thus, the correction term in the SParareal algorithm is given by:
    \begin{equation}\label{210}
        U^{k+1}_{n+1} = \mathcal{G}_{\Delta T}(U^{k+1}_n) + \mathcal{F}_{\Delta t}(\alpha^k_n) - \mathcal{G}_{\Delta T}(\alpha^k_n),\quad 0 \leq k \leq n \leq N-1.
    \end{equation}
    
The random variable $\alpha^k_n$ follows a probability distribution as defined by the sampling rules in Table \ref{tab:my_label}. Sampling rules 1 and 2 correspond to multivariate Gaussian perturbations with marginal means $\mathcal{F}_{\Delta t}(U^{k-1}_{n-1})$ and $U^k_n$. The marginal standard deviations is given by $\sigma^k_n=\lvert  \mathcal{G}_{\Delta T}(U^k_{n-1})-\mathcal{G}_{\Delta T}(U^{k-1}_{n-1}) \rvert$. The variable $z^k_n \sim \mathcal{N}(0, \mathcal{I}_d)$ represents a standard $d$-dimensional Gaussian random vector.
% {\color{blue} Sampling rules 3 and 4 correspond to perturbations following a multivariate uniform distribution
% with the same marginal means and standard deviations $\sigma_n^k$ as rules 1 and 2, respectively.} 
Sampling rules 1 and 3 share  the same marginal means and standard deviations $\sigma_n^k$. Similarly, sampling rules 2 and 4 have identical marginal means and standard deviations $\sigma_n^k$.
 The $\omega^k_n \sim \mathcal{U}([0,d]^d)$ is a standard $d$-dimensional uniform random vector with independent components. The design of the Sampling rules allows $\alpha^k_n$ to vary with both the iteration index $k$ and the time step $n$. Different perturbations with distinct families of distributions, marginal means, and correlations are constructed to assess the performance of the SParareal algorithm.
Notably, when $k=N$, we have $\alpha^k_n = U^k_n$, which implies that $\xi^k_n(U^k_n) = 0$.

\begin{table}[h]
\renewcommand{\arraystretch}{1.65}
\belowrulesep=0pt
\aboverulesep=4pt
    \centering
    \caption{Sampling Rules for Stochastic Perturbations}
    \begin{tabular}{|c|c|}
    \toprule
        \hline
        \textbf{Sampling Rule} & \(\alpha^k_n\) \\ \hline
        1 & \(\mathcal{F}_{\Delta t}(U^{k-1}_{n-1}) + \sigma^k_n \circ z^k_n\) \\ \hline
        2 & \(U^k_n + \sigma^k_n \circ z^k_n\) \\ \hline
        3 & \(\mathcal{F}_{\Delta t}(U^{k-1}_{n-1}) + \left( \sqrt{3} \sigma^k_n \circ (2 \omega^k_n - 1) \right)\) \\ \hline
        4 & \(U^k_n + \left( \sqrt{3} \sigma^k_n \circ (2 \omega^k_n - 1) \right)\) \\ \hline
        \bottomrule
    \end{tabular}
    \label{tab:my_label}
\end{table}

% {\color{red}The operational process of the SParareal algorithm is outlined in Algorithm \ref{alg:parareal_sampling}. }

% The operational process of the SParareal algorithm, originally proposed in \cite{2021Traffic}, is outlined in Algorithm \ref{alg:parareal_sampling}, with modifications tailored to the context of this study.

The operational process of the SParareal algorithm, originally proposed in \cite{2021Traffic} for deterministic differential equations, is outlined in Algorithm \ref{alg:parareal_sampling}. In this work, we extend the algorithm to stochastic differential equations (SDEs).

%\renewcommand{\arraystretch}{1.38 }
% \begin{algorithm}[htp] 
% \textbf{Input}: Initial solution \( U^0 \) over time interval \([0, T]\), coarse propagator \( \mathcal{G}_{\delta T} \), fine propagator \( \mathcal{F}_{\Delta t} \), maximum iteration \( N \), tolerance \( \varepsilon \), number of samples \( M \).

\begin{algorithm}[htp]
\caption{Stochastic Parareal}
\label{alg:parareal_sampling}
\textbf{Input}: 
    Initial solution \( U^0 \) over time interval \([0, T]\);
  Coarse propagator \( \mathcal{G}_{\Delta T} \);
 Fine propagator \( \mathcal{F}_{\Delta t} \);
    Maximum iteration count \( N \);
  Tolerance \( \varepsilon \);
 Number of samples \( M \);

\begin{algorithmic}[1]
\State Run initial parareal iteration for \( k = 1 \) using standard parareal steps.
\For{$k = 2$ to $N$}
    \State \textbf{Compute Correlation Matrices (if dimension \( d > 1 \)):}
    \Statex \quad Set \( R^{k-1}_n = I \) for all \( n \) if \( k < 3 \).
    \If{$k \geq 3$}
        \For{$n = I+1$ to $N-1$}
            \State Calculate \( R^{k-1}_n \) based on previous samples \( \mathcal{F}(\alpha^{k-2}_{n-1,1}), \ldots, \mathcal{F}(\alpha^{k-2}_{n-1,M}) \).
        \EndFor
    \EndIf

    \State \textbf{Initial Value Sampling and Propagation (Parallel for each sub-interval):}
    \For{$n = I$ to $N-1$}
        \If{$n == I$}
            \State Propagate converged solution from \( T_I \): \( \tilde{U}^{k-1}_{n+1} = \mathcal{F}(U^{k-1}_n) \).
        \Else
            \State Initialize first sample as predictor-corrector: \( \alpha^{k-1}_{n,1} = U^{k-1}_n \).
            \State Propagate \( \alpha^{k-1}_{n,1} \): \( \tilde{U}_{n+1,1} = \mathcal{F}(\alpha^{k-1}_{n,1}) \).
            \For{$m = 2$ to $M$}
                \State Sample \( \alpha^{k-1}_{n,m} \sim \pi^{k-1}_n \) (from defined probability distribution).
                \State Propagate \( \alpha^{k-1}_{n,m} \): \( \tilde{U}_{n+1,m} = \mathcal{F}(\alpha^{k-1}_{n,m}) \).
            \EndFor
        \EndIf
    \EndFor

    \State \textbf{Optimal Sample Selection for Continuity:}
    \For{$n = I+1$ to $N-1$}
        \State Select \( J = \arg \min_{j \in \{1, \ldots, M\}} \|\alpha^{k-1}_{n,j} - \tilde{U}^{k-1}_n\|_2 \).
        \State Set \( \widehat{\alpha}^{k-1}_n = \alpha^{k-1}_{n,J} \), \( \tilde{U}^{k-1}_{n+1} = \tilde{U}_{n+1,J} \).
    \EndFor

    \State \textbf{Coarse Propagation with Selected Optimal Sample (Parallel):}
    \For{$n = I+1$ to $N-1$}
        \State Compute coarse solution: \( \widehat{U}^{k-1}_{n+1} = \mathcal{G}(\widehat{\alpha}^{k-1}_n) \).
    \EndFor

    \State \textbf{Predictor-Corrector Update:}
    \For{$n = I+1$ to $N$}
        \State Perform correction: \( U^k_n = \tilde{U}^{k-1}_n + \widehat{U}^k_n - \widehat{U}^{k-1}_n \).
    \EndFor

    \State \textbf{Convergence Check:}
    \State Update \( I = \max\{n \in \{I+1, \ldots, N\} : \|U^k_i - U^{k-1}_i\|_{\infty} < \varepsilon \text{ for all } i \leq n\} \).
    \If{$I == N$}
        \State Return \( k \) and \( U^k \) as the converged solution.
    \EndIf
\EndFor
\end{algorithmic}

\textbf{Output}: Converged solution \( U^k \) for all sub-intervals \([T_n, T_{n+1}]\).
\end{algorithm}

To prepare for subsequent work, we will present the following lemma.

\begin{Lem}\label{Gronwall_inequality}
    {\cite{2022Traffic}} \label{lemma}
     Let $\widehat{\epsilon}^{(k)}$ be a non-negative sequence, where $\tilde{A}$ and $\tilde{B} \in \mathbb{R}$ are non-negative constants. Suppose the following condition is satisfied:
     \begin{equation}\label{2.9}
        \widehat{\epsilon}^{(k+1)} \leq \tilde{A} \widehat{\epsilon}^{(k)} + \tilde{B} \widehat{\epsilon}^{(k-1)}.
    \end{equation}
    If $\widehat{\epsilon}^{(1)}$ and $\widehat{\epsilon}^{(0)}$ exist, then the following relation holds:
    \begin{equation}\label{2.10}
       \widehat{\epsilon}^{k} \leq \widehat{\epsilon}^{0} \left( \frac{\tilde{A} + \sqrt{\tilde{A}^2 + 4\tilde{B}}}{2} \right)^k.
    \end{equation}
\end{Lem}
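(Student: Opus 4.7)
The natural approach is to treat inequality \eqref{2.9} as a second-order linear recurrence bound and exploit its characteristic equation. I would first form the quadratic $x^2 - \tilde{A}x - \tilde{B} = 0$, whose larger (non-negative) root is
\begin{equation*}
\lambda := \frac{\tilde{A}+\sqrt{\tilde{A}^2+4\tilde{B}}}{2},
\end{equation*}
so that $\lambda^2 = \tilde{A}\lambda + \tilde{B}$. This single algebraic identity, together with the non-negativity of $\tilde{A}$, $\tilde{B}$, and the sequence, is all that the proof rests on.

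The core step is then induction on $k$ to establish $\widehat{\epsilon}^{(k)} \leq \widehat{\epsilon}^{(0)}\lambda^k$. The $k=0$ case is trivial. Assuming the bound at indices $k-1$ and $k$, I apply \eqref{2.9} and the characteristic identity:
\begin{align*}
\widehat{\epsilon}^{(k+1)}
&\leq \tilde{A}\widehat{\epsilon}^{(k)} + \tilde{B}\widehat{\epsilon}^{(k-1)}
 \leq \widehat{\epsilon}^{(0)}\bigl(\tilde{A}\lambda^{k} + \tilde{B}\lambda^{k-1}\bigr) \\
&= \widehat{\epsilon}^{(0)}\lambda^{k-1}(\tilde{A}\lambda+\tilde{B})
 = \widehat{\epsilon}^{(0)}\lambda^{k+1},
\end{align*}
which closes the induction. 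Non-negativity is needed here to preserve the direction of the inequality when the induction hypothesis is substituted into the right-hand side of \eqref{2.9}.

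The one subtlety, and what I expect to be the real obstacle, is the $k=1$ base case: the conclusion \eqref{2.10} at $k=1$ reads $\widehat{\epsilon}^{(1)} \leq \lambda\,\widehat{\epsilon}^{(0)}$, so the stated hypothesis that $\widehat{\epsilon}^{(0)}$ and $\widehat{\epsilon}^{(1)}$ merely ``exist'' is not quite enough on its own; a control of $\widehat{\epsilon}^{(1)}$ by $\lambda\widehat{\epsilon}^{(0)}$ must be in force. I would resolve this by reading the hypothesis as in \cite{2022Traffic}, where this scaling is part of the standing assumption, or, failing that, by strengthening the conclusion to $\widehat{\epsilon}^{(k)} \leq \max\{\widehat{\epsilon}^{(0)},\widehat{\epsilon}^{(1)}/\lambda\}\,\lambda^k$; the induction then goes through verbatim. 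Beyond this bookkeeping point, the argument is routine.
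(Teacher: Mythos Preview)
The paper does not supply its own proof of this lemma; it is simply quoted from \cite{2022Traffic} with no argument given. Your characteristic-root-plus-induction approach is exactly the standard way to prove such a bound, and your identification of the $k=1$ base case as the only genuine issue is apt: as stated, the lemma's hypothesis that $\widehat{\epsilon}^{(1)}$ merely ``exists'' does not by itself yield $\widehat{\epsilon}^{(1)}\le\lambda\,\widehat{\epsilon}^{(0)}$, so either that scaling must be read in from the cited source or the conclusion relaxed to your $\max\{\widehat{\epsilon}^{(0)},\widehat{\epsilon}^{(1)}/\lambda\}\,\lambda^k$ form. With that caveat noted, there is nothing to correct in your argument.
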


\section{Mean Square Stability}
In this section, we consider the SParareal algorithm for solving the Dahlquist test equation and perform a theoretical analysis.

\subsection{Stochastic $\theta$-Method}
We take the drift term $f(u(t)) = \lambda u(t)$ and the diffusion term $g(u(t)) = \mu u(t)$ from equation \eqref{2.1}. Next, we consider the following stochastic differential equation (SDE):
\begin{equation}\label{3.1}
    du(t) = \lambda u \left(t\right) \, dt + \mu u \left(t\right) \, dW \left(t\right), \quad t \in [0,T],
\end{equation}
where $u(0) = u_0 \neq 0$, $\lambda, \mu \in \mathbb{C}$, and $W(t)$ is a standard Wiener process.

Given a parameter $\theta \in [0,1]$, the corresponding stochastic $\theta$-method \cite{MR1781202,MR2204723} for the SDE \eqref{3.1} takes the form:
\begin{equation}
    U_{n+1} = \left( \frac{\left(1 + \left(1-\theta\right)\right) \delta t \lambda}{1 - \theta \delta t \lambda} + \frac{\sqrt{\delta t} \mu}{1 - \theta \delta t \lambda} \mathcal{V}_n \right) U_n.
\end{equation}
We use the stochastic $\theta$-method as the propagation operator. Let:
\begin{equation}\label{3.2}
    \begin{aligned}
        a &= \frac{\left(1 + \left(1-\theta_f\right)\right) \Delta t \lambda}{1 - \theta_f \Delta t \lambda}, \quad \quad
        A = \frac{\left(1 + \left(1-\theta_g\right)\right) \Delta T \lambda}{1 - \theta_g \Delta T \lambda}, \\
        b &= \frac{\sqrt{\Delta t} \mu}{1 - \theta_f \Delta t \lambda}, \quad \quad\quad \quad\quad \quad
        B = \frac{\sqrt{\Delta T} \mu}{1 - \theta_g \Delta T \lambda}.
    \end{aligned}
\end{equation}
Thus, through appropriate calculations, the coarse propagation operator $\mathcal{G}_{\Delta T}$ and the fine propagation operator $\mathcal{F}_{\Delta t}$ are given by:
\begin{equation}\label{34}
\begin{aligned}
    \mathcal{F}(T_n,U_n^{k},\Delta t)&=\prod_{j=0}^{M-1}\left(a+b\upsilon _{n+\frac{j}{M}}\right)U_{n}^{k},\\
   \mathcal{G}(T_n,U_n^{k},\Delta T)&=(A+BV_n)U_{n}^{k}.
\end{aligned}
\end{equation}
where $\{V_n\}$ and $\{\upsilon_{n + \frac{j}{M}}\}_{j=0,1,\dots,M-1}$ are random variables that follow a standard normal distribution along the same path, satisfying the following relation:
\begin{equation*}
    V_n = \frac{1}{\sqrt{M}} \sum_{j=0}^{M-1} \upsilon_{n + \frac{j}{M}}, \quad n = 0, 1, \dots, N-1.
\end{equation*}
We assume that the coarse propagation operator $\mathcal{G}_{\Delta T}$ and the fine propagation operator $\mathcal{F}_{\Delta t}$ employ the stochastic $\theta$-method with parameters $\theta_g$ and $\theta_f$, respectively. It is well-known that for $n \geq 2$, the numerical scheme of the SParareal algorithm is given by \eqref{210}. Combining \eqref{34} with \eqref{210}, the SParareal scheme is expressed as:
\begin{align}\label{3.4}
     U_{n+1}^{k+1} &= \left(A + B V_n\right) U_n^{k+1} + \left[\prod_{j=0}^{M-1} \left(a + b \upsilon_{n + \frac{j}{M}} \right) - \left(A + B V_n\right) \right] \alpha_n^k, ~~n = 0, 1, \dots, N-1.
\end{align}

% We assume that the coarse propagation operator $\mathcal{G}_{\delta T}$ and the fine propagation operator $\mathcal{F}_{\Delta t}$ are respectively $\theta_g$ and $\theta_f$. It is known that for $n \ge 2 $, the numerical scheme of the SParareal algorithm is given by \eqref{210}. Combined \eqref{34} with \eqref{210}, the SParareal scheme is given by:
% \begin{align}\label{3.4}
%      U_{n+1}^{k+1}&=(A+BV_n)U_n^{k+1}+[\prod_{j=0}^{M-1}(a+b\upsilon_{n+\frac{j}{M}})-(A+BV_n)]\alpha_n^k,\quad 
%       n=0,1,...,N-1.
% \end{align}
\subsection{Stochastic mean-square convergence}
In this section, we consider the special case when
$M = \frac{N_f}{N_g} = \frac{\Delta T}{\Delta t} = 2$. Under this assumption, the following conclusions hold:

\begin{Theo}\label{Thm2.1}
\setstretch{1.25}
    Assume that $M = \frac{\Delta T}{\Delta t} = 2$ and $\lambda, \mu \in \mathbb{C}$ satisfy the condition:
\begin{equation}\label{3.5}
        \mathrm{Im} \left( A \overline{a}^2 + \sqrt{2} B \overline{ab} \right) = 0,
    \end{equation}
    where the variables $a$, $b$, $A$, and $B$ are defined in \eqref{3.2}. Let the sequence $\{u_n\}$ be generated serially through $\mathcal{F}_{\Delta t}$. Define the error sequence $\epsilon_n^k=\mathbb{E}\left[\lvert U^k_n-u_n \rvert^2\right]$
    denotes the expectation operator. The sequences $\{U_n^k\}$ and $\{\alpha_n^k\}$ are determined by the relation in \eqref{3.4}, and the term $\alpha_n^k$ is defined according to sampling rule 2 or 4. 
    Let $\widehat{\epsilon}^{(k)} = \sup\limits_{n} \epsilon_n^k$. If $\alpha < 1$, then the linear error bound of the stochastic differential equation \eqref{3.1} satisfies the following relation:
\begin{equation}\label{3.7}
        \widehat{\epsilon}^{(k)} \leq \widehat{\epsilon}^{(0)} \left[ \frac{\frac{2\beta(1 + 2\gamma)}{1 - \alpha} + \sqrt{\left( \frac{2\beta(1 + 2\gamma)}{1 - \alpha} \right)^2 + 4 \frac{4\beta\gamma}{1 - \alpha}}}{2} \right]^k,
    \end{equation}
    where
    \begin{eqnarray}
        \begin{aligned}
            \gamma&=\lvert A  \rvert^2+\lvert B  \rvert^2,\\
            \alpha&=\lvert A  \rvert^2+\lvert B  \rvert^2+\lvert A\overline{a}^2-\lvert B  \rvert^2-\lvert A  \rvert^2+\sqrt{2}B\overline{ab} \rvert,\\
            \beta&=-2\mathrm{Re}(\overline{A}a^2)+\lvert A  \rvert^2+\lvert B  \rvert^2-2\sqrt{2}\mathrm{Re}(\overline{B}ab)\\
            &+(\lvert a  \rvert^2+\lvert b  \rvert^2)^2+\lvert A\overline{a}^2-\lvert B  \rvert^2-\lvert A  \rvert^2+\sqrt{2}B\overline{ab}\rvert.
        \end{aligned}
        \nonumber
    \end{eqnarray}
\end{Theo}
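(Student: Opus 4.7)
The plan is to derive a two-step inequality of the form $(1-\alpha)\widehat{\epsilon}^{(k+1)} \leq 2\beta(1+2\gamma)\,\widehat{\epsilon}^{(k)} + 4\beta\gamma\,\widehat{\epsilon}^{(k-1)}$ and then conclude via Lemma~\ref{Gronwall_inequality}. First I would introduce the error $e_n^k := U_n^k - u_n$ and note that the serially generated fine solution satisfies $u_{n+1} = \prod_{j=0}^{1}(a+b\,\upsilon_{n+j/M})\,u_n$. Subtracting from the SParareal recursion \eqref{3.4} yields
\begin{equation*}
e_{n+1}^{k+1} = (A+BV_n)\,e_n^{k+1} + Q_n\,(\alpha_n^k - u_n),\qquad Q_n := \prod_{j=0}^{1}(a+b\,\upsilon_{n+j/M}) - (A+BV_n).
\end{equation*}
Writing $\alpha_n^k = U_n^k + \eta_n^k$ under sampling rule 2 or 4, the perturbation $\eta_n^k$ is conditionally mean zero given the $\sigma$-algebra $\mathcal{F}_{n-1}$ of past randomness, with conditional second moment $(\sigma_n^k)^2 = |A+BV_{n-1}|^2\,|U_{n-1}^k - U_{n-1}^{k-1}|^2$, so $\alpha_n^k - u_n = e_n^k + \eta_n^k$.

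The main computation is to expand $|e_{n+1}^{k+1}|^2$ and take expectation, exploiting two independences: the freshly drawn $V_n$ and $\upsilon_{n+j/M}$ are independent of everything in $\mathcal{F}_{n-1}$ (in particular of $e_n^{k+1}, e_n^k, \eta_n^k$), and $\mathbb{E}[\eta_n^k\mid\mathcal{F}_{n-1}]=0$. The pure quadratic pieces contribute $\gamma\,\epsilon_n^{k+1}$ with $\gamma=|A|^2+|B|^2$, and—via the elementary bound $|a+b|^2\leq 2|a|^2+2|b|^2$—a term at most $2\mathbb{E}[|Q_n|^2]\,(\epsilon_n^k + \mathbb{E}[(\sigma_n^k)^2])$, with $\mathbb{E}[|Q_n|^2]$ computed directly from the expansion $\prod_{j=0}^{1}(a+b\,\upsilon_{n+j/M}) = a^2 + \sqrt{2}\,ab\,V_n + b^2\,\upsilon_n\upsilon_{n+1/2}$ together with $\mathbb{E}[V_n^2]=1$ and $\mathbb{E}[\upsilon_n\upsilon_{n+1/2}]=0$. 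The cross term reduces by independence to $2\,\mathrm{Re}\{C\cdot\mathbb{E}[e_n^{k+1}\overline{(e_n^k+\eta_n^k)}]\}$, where $C := \mathbb{E}[(A+BV_n)\overline{Q_n}] = A\overline{a}^2 - |A|^2 + \sqrt{2}\,B\overline{a}\,\overline{b} - |B|^2$; the $\eta_n^k$ portion vanishes by conditional mean zero, and condition \eqref{3.5} forces $\mathrm{Im}\,C = 0$, so Cauchy--Schwarz together with AM--GM produces the clean bound $|C|\,(\epsilon_n^{k+1} + \epsilon_n^k)$.

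Separately, the inequality $|e_{n-1}^k - e_{n-1}^{k-1}|^2 \leq 2|e_{n-1}^k|^2 + 2|e_{n-1}^{k-1}|^2$ combined with independence of $V_{n-1}$ from $\mathcal{F}_{n-2}$ gives $\mathbb{E}[(\sigma_n^k)^2] \leq 2\gamma\,(\epsilon_{n-1}^k + \epsilon_{n-1}^{k-1})$. Collecting the three contributions, identifying $\alpha = \gamma + |C|$ as the coefficient of $\epsilon_n^{k+1}$, and over-bounding the remaining coefficients using $\mathbb{E}[|Q_n|^2]\leq\beta$ (with $\beta = \mathbb{E}[|Q_n|^2] + |C|$) delivers
\begin{equation*}
\epsilon_{n+1}^{k+1} \leq \alpha\,\epsilon_n^{k+1} + 2\beta\,\epsilon_n^k + 4\gamma\beta\,(\epsilon_{n-1}^k + \epsilon_{n-1}^{k-1}).
\end{equation*}
Taking the supremum over $n$ on both sides and dividing by $1-\alpha>0$ (permitted since $\alpha<1$) yields the promised two-step inequality, and Lemma~\ref{Gronwall_inequality} with $\tilde{A} = 2\beta(1+2\gamma)/(1-\alpha)$ and $\tilde{B} = 4\beta\gamma/(1-\alpha)$ then produces \eqref{3.7}.

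The main obstacle is the careful probabilistic bookkeeping: precisely tracking which fresh randomness is independent of which past-error quantity, computing $C$ and $\mathbb{E}[|Q_n|^2]$ from the $M=2$ Gaussian product expansion, and using condition \eqref{3.5} to extract a real-valued upper bound from the complex cross term so that $|C|$ cleanly enters $\alpha$ and $\beta$. Once those pieces are in place, the supremum step and the appeal to Lemma~\ref{Gronwall_inequality} are routine.
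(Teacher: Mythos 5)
Your proposal is correct and follows essentially the same route as the paper: subtract the serial fine solution from the $M=2$ SParareal recursion, expand the mean-square error using the moment identities for $V_n$ and $\upsilon_{n+j/2}$, use condition \eqref{3.5} to turn the cross term into the real bound $\lvert C\rvert(\epsilon_n^{k+1}+\epsilon_n^k)$, bound $\mathbb{E}[(\sigma_n^k)^2]\le 2\gamma(\epsilon_{n-1}^k+\epsilon_{n-1}^{k-1})$, and close with the two-step Gronwall lemma. The only (harmless) variation is that you split $\alpha_n^k-u_n=e_n^k+\eta_n^k$ up front and kill the $\eta_n^k$ cross term by conditional centering, then over-bound to recover the paper's coefficients $2\beta$ and $4\beta\gamma$, whereas the paper bounds $\mathbb{E}\lvert\alpha_n^k-u_n\rvert^2$ as a whole afterwards; both yield identical constants in \eqref{3.7}.
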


\begin{proof}
Substituting $M = 2$ into equation \eqref{3.4}, the SParareal scheme becomes
\begin{equation}\label{3.9}
    U_{n+1}^{k+1} = (A + B V_n) U_n^{k+1} + \left[ (a + b \upsilon_{n+\frac{1}{2}})(a + b \upsilon_n) - (A + B V_n) \right] \alpha_n^k.
\end{equation}
Based on the expression in \eqref{3.9}, the error between $U_{n+1}^{k+1}$ and $u_{n+1}$ can be written as
\begin{equation}\label{3.10}
    U_{n+1}^{k+1} - u_{n+1} = (A + B V_n)(U_n^{k+1} - u_n) + \left[ (a + b \upsilon_{n+\frac{1}{2}})(a + b \upsilon_n) - (A + B V_n) \right](\alpha_n^k - u_n).
\end{equation}
Taking the modulus and squaring both sides of equation \eqref{3.10} yields
\begin{equation}\label{3.11}
\begin{split}
    \lvert U_{n+1}^{k+1}-u_{n+1}\rvert ^2 = &\left[ (A + B V_n)(U_n^{k+1} - u_n) + \left( (a + b \upsilon_{n+\frac{1}{2}})(a + b \upsilon_n) - (A + B V_n) \right)(\alpha_n^k - u_n) \right] \cdot \\
    & \left[ \overline{(A + B V_n)(U_n^{k+1} - u_n) + \left( (a + b \upsilon_{n+\frac{1}{2}})(a + b \upsilon_n) - (A + B V_n) \right)(\alpha_n^k - u_n)} \right].
\end{split}
\end{equation}
Since $\{V_n\}$ and $\{\upsilon_{n+\frac{j}{2}}\}$ are random variables that follow a standard normal distribution, the following equalities hold
\begin{equation}\label{e1}
    \sqrt{\Delta T} V_n = W\left(\left(n+1\right)\Delta T\right) - W\left(n \Delta T\right),
    \quad \sqrt{\Delta t} \upsilon_{n+\frac{j}{2}} = W\left(n \Delta T + (j+1)\Delta t\right) - W(n \Delta T + j \Delta t),
\end{equation}
where $W(t)$ denotes a standard Wiener process. Thus, by the properties of the Wiener process, we obtain
\begin{equation}\label{e2}
    \sqrt{\Delta T} V_n = \sum_{j=0}^{1} \sqrt{\Delta t} \upsilon_{n+\frac{j}{2}}, \quad \{V_n\} \sim \mathcal{N}(0,1), \quad \{\upsilon_{n+\frac{j}{2}}\}_{j=0,1} \sim \mathcal{N}\left(0,1\right).
\end{equation}
From \eqref{e2}, the following relations hold
\begin{equation}\label{e4}
\begin{aligned}
    &\mathbb{E}\left(V_n\right) = \mathbb{E}\left(\upsilon_{n+\frac{j}{2}}\right) = 0, \\
    &\mathbb{E}(V_n^2) = \mathbb{E}\left(\upsilon_{n+\frac{j}{2}}^2\right) = 1, \\
    &\mathbb{E}\left(\upsilon_{n+\frac{j}{2}} \upsilon_{n+\frac{i}{2}}\right) = 0, \quad i \neq j, \\
    &\mathbb{E}\left(V_n \upsilon_{n+\frac{j}{2}}\right) = \sqrt{\frac{1}{2}}.
\end{aligned}
\end{equation}
Furthermore, since ${U^k_n}$, ${U^{k+1}_n}$, $\alpha^k_n$, and ${u_n}$ are all $\mathcal{F}_{n\Delta t}$-measurable, from equation \eqref{e4}, the following relations hold (the result for ${U^{k+1}_n}$ is similar)
\begin{equation}
\begin{aligned}
    &\mathbb{E}\left(V_n (U^k_n - u_n)\right) = \mathbb{E}\left((U^k_n - u_n) \mathbb{E}(V_n \mid \mathcal{F}_{n\Delta t})\right) = 0, \\
    &\mathbb{E}\left(V_n (\alpha^k_n - u_n)\right) = \mathbb{E}\left((\alpha^k_n - u_n) \mathbb{E}(V_n \mid \mathcal{F}_{n\Delta t})\right) = 0, \\
    &\mathbb{E}\left(V_n^2 (U^k_n - u_n)\right) = \mathbb{E}\left((U^k_n - u_n) \mathbb{E}(V_n^2 \mid \mathcal{F}_{n\Delta t})\right) = \mathbb{E}(U^k_n - u_n), \\
    &\mathbb{E}\left(V_n^2 (\alpha^k_n - u_n)\right) = \mathbb{E}\left((\alpha^k_n - u_n) \mathbb{E}(V_n^2 \mid \mathcal{F}_{n\Delta t})\right) = \mathbb{E}(\alpha^k_n - u_n).
\end{aligned}
\end{equation}
Similarly, it follows that
\begin{equation}
\begin{aligned}
    &\mathbb{E}\left(\upsilon_{n + \frac{j}{2}}^2 (U^k_n - u_n)\right) = \mathbb{E}(U^k_n - u_n), \\
    &\mathbb{E}\left(\upsilon_{n + \frac{j}{2}}^2 (\alpha^k_n - u_n)\right) = \mathbb{E}(\alpha^k_n - u_n), \\
    &\mathbb{E}\left(V_n \upsilon_{n + \frac{j}{2}} \upsilon_{n + \frac{i}{2}} (U^k_n - u_n)\right) = 0, \quad i \neq j, \\
    &\mathbb{E}\left(V_n \upsilon_{n + \frac{j}{2}} \upsilon_{n + \frac{i}{2}} (\alpha^k_n - u_n)\right) = 0, \quad i \neq j, \\
    &\mathbb{E}\left(\upsilon_{n + \frac{j}{2}} (U^k_n - u_n)\right) = \mathbb{E}\left(\upsilon_{n + \frac{j}{2}} \upsilon_{n + \frac{i}{2}} (U^k_n - u_n)\right) = 0, \quad i \neq j, \\
    &\mathbb{E}\left(\upsilon_{n + \frac{j}{2}} (\alpha^k_n - u_n)\right) = \mathbb{E}\left(\upsilon_{n + \frac{j}{2}} \upsilon_{n + \frac{i}{2}} (\alpha^k_n - u_n)\right) = 0, \quad i \neq j.
\end{aligned}
\end{equation}
Taking the expectation of equation \eqref{3.11}, we obtain
\begin{align}\label{3.12}
    &\mathbb{E}\left[\lvert U_{n+1}^{k+1}-u_{n+1}\rvert^2 \right] \nonumber \\
    &= \mathbb{E}\left[ \left( (A + B V_n)(U_n^{k+1} - u_n) + \left( (a + b \upsilon_{n + \frac{1}{2}})(a + b \upsilon_n) - (A + B V_n) \right)(\alpha_n^k - u_n) \right) \right. \nonumber \\
    &\quad \cdot \left. \overline{ \left( (A + B V_n)(U_n^{k+1} - u_n) + \left( (a + b \upsilon_{n + \frac{1}{2}})(a + b \upsilon_n) - (A + B V_n) \right)(\alpha_n^k - u_n) \right) } \right] \nonumber \\
    &= \left(\lvert A \rvert^2 + \lvert B \rvert^2\right) \mathbb{E}\left[\lvert U_n^{k+1} - u_n\rvert^2\right] \nonumber \\
    &\quad + \left( A \overline{a}^2 - \lvert B \rvert^2 - \lvert A \rvert^2 + \sqrt{2} B \overline{ab} \right) \mathbb{E}\left[(U_n^{k+1} - u_n)(\overline{\alpha_n^k} - \overline{u_n})\right] \nonumber \\
    &\quad + \left( \overline{A} a^2 - \lvert B \rvert^2 - \lvert A \rvert^2 + \sqrt{2} \overline{B} ab \right) \mathbb{E}\left[(\overline{U_n^{k+1}} - \overline{u_n})(\alpha_n^k - u_n)\right] \nonumber \\
    &\quad + \left( - \overline{A} a^2 - A \overline{a}^2 + \lvert A \rvert^2 +\lvert B \rvert^2 - \sqrt{2} \overline{B} ab - \sqrt{2} B \overline{ab} + (\lvert a \rvert^2 + \lvert b \rvert^2 )^2 \right) \mathbb{E}\left[\lvert \alpha_n^k - u_n \rvert^2\right] \nonumber \\
    &= I_1 + I_2 + I_3 + I_4.
\end{align}
    From assumption \eqref{3.5}, we obtain 
    \begin{equation}\label{3.13}
        \begin{aligned}
            I_2+I_3
            &=\left(A\overline{a}^2-\lvert B\rvert^2-\lvert A\rvert^2+\sqrt{2}B\overline{ab}\right)\mathbb{E}\left[2\mathrm{Re}\left((U^{k+1}_n-u_{n})(\overline{\alpha}^k_n-\overline{u}_n)\right)\right]\\
            &\le \lvert A\overline{a}^2-\lvert B\rvert^2-\lvert A\rvert^2+\sqrt{2}B\overline{ab}\rvert \left(\mathbb{E}\left[\lvert U^{k+1}_n-u_{n}\rvert ^2\right] + \mathbb{E}\left[\lvert \alpha ^{k}_n-u_{n}\rvert^2\right]\right).
            \end{aligned}
    \end{equation}
     Substituting equation \eqref{3.13} into \eqref{3.12} results in
    \begin{eqnarray}\label{3.15}
        \mathbb{E}\left[\lvert U_{n+1}^{k+1}-u_{n+1}\rvert ^2\right]\le \alpha\mathbb{E}\left[\lvert U_{n}^{k+1}-u_{n}\rvert ^2\right]+\beta\mathbb{E}\left[\lvert\alpha ^{k}_n-u_{n}\rvert^2\right].
    \end{eqnarray}
    where
    \begin{align*}
        \alpha&=\lvert A\rvert^2+\lvert B\rvert^2+\lvert A\overline{a}^2-\abs{B}^2-\abs{ A}^2+\sqrt{2}B\overline{ab}\rvert, \\
        \beta&=-2\mathrm{Re}(\overline{A}a^2)+\lvert A\rvert^2+\lvert B\rvert^2-\sqrt{2}\mathrm{Re}(\overline{B}ab)+(\lvert a\rvert^2+\lvert b\rvert^2)^2+\lvert A\overline{a}^2-\abs{B}^2-\abs{ A}^2+\sqrt{2}B\overline{ab} \rvert.
    \end{align*}
    Applying sampling rule 2 to term $\mathbb{E}\left[\lvert\alpha ^{k}_n-u_{n}\rvert\right]^2$ in equation \eqref{3.15} results in
    \begin{align}\label{3.16}
       \mathbb{E}\left[\lvert\alpha ^{k}_n-u_{n}\rvert\right]^2
        &=\mathbb{E}\left[\lvert U_{n}^{k}+(\sigma^k_n\circ z^k_n)-u_n \rvert^2\right]\nonumber\\
        &=\mathbb{E}\left[\left((U_{n}^{k}-u_n)+(\sigma^k_n\circ z^k_n)\right)\overline{\left( (U_{n}^{k}-u_n)+(\sigma^k_n\circ z^k_n) \right)}\right]\nonumber\\
        &=\mathbb{E}\left[\lvert U_{n}^{k}-u_n\rvert^2+(U_{n}^{k}-u_n)\overline{(\sigma^k_n\circ z^k_n)}+\overline{(U_{n}^{k}-u_n)}(\sigma^k_n\circ z^k_n)+\lvert \sigma^k_n\circ z^k_n\rvert^2\right]\nonumber\\
        &=\mathbb{E}\left[\lvert U_{n}^{k}-u_n\rvert^2\right]+\mathbb{E}\left[\lvert\sigma^k_n\circ z^k_n\rvert^2\right]+\mathbb{E}\left[(U_{n}^{k}-u_n)\overline{(\sigma^k_n\circ z^k_n)}\right]+\mathbb{E}\left[\overline{(U_{n}^{k}-u_n)}(\sigma^k_n\circ z^k_n)\right]\nonumber\\
        &=\mathbb{E}\left[\lvert U_n^k-u_n\rvert^2\right]+\mathbb{E}\left[\lvert \sigma^k_n\circ z^k_n\rvert^2\right]+\mathbb{E}\left[2\mathrm{Re}(U_{n}^{k}-u_n)\overline{(\sigma^k_n\circ z^k_n)}\right]\nonumber\\
        &\le 2\mathbb{E}\left[\lvert U_n^k-u_n\rvert^2\right]+2\mathbb{E}\left[\lvert\sigma^k_n\circ z^k_n\rvert^2\right]\nonumber\\
        &\le 2\mathbb{E}\left[\lvert U_n^k-u_n\rvert^2\right]+2\mathbb{E}\left[\lvert\sigma^k_n\rvert^2\right]\mathbb{E}\left[\lvert z^k_n\rvert^2\right]\nonumber\\
        &= 2\mathbb{E}\left[\lvert U_n^k-u_n\rvert^2\right]+2\mathbb{E}\left[\lvert \sigma^k_n\rvert^2\right].
    \end{align}
For the term $\mathbb{E}\left[\lvert\sigma^k_n\rvert^2\right]$ in equation \eqref{3.16}, we have
    \begin{align}\label{3.17}
    \mathbb{E}\left[\lvert\sigma^k_n\rvert\right]^2 &=\mathbb{E}\left[\lvert(A+BV_n)(U^k_{n-1}-U^{k-1}_{n-1})\rvert^2\right]\nonumber\\
    &=\mathbb{E}\left[\lvert(A+BV_n)(U^k_{n-1}-u_{n-1}) - (A+BV_n)(U^{k-1}_{n-1}-u_{n-1})\rvert^2\right]\nonumber\\
    &=\mathbb{E}\left[\left((A+BV_n)(U^k_{n-1}-u_{n-1}) - (A+BV_n)(U^{k-1}_{n-1}-u_{n-1})\right)\right.\nonumber\\
    &\quad \cdot \left. \overline{\left((A+BV_n)(U^k_{n-1}-u_{n-1}) - (A+BV_n)(U^{k-1}_{n-1}-u_{n-1})\right)}\right]\nonumber\\
    &\le \mathbb{E}\left[\left(\abs{A}^2 + A \overline{B} V_n + \overline{A} B V_n + \abs{B}^2 V_n^2\right) \lvert U^k_{n-1} - u_{n-1}\rvert^2 \right.\nonumber\\
    &\quad + \left(\abs{A}^2 + A \overline{B} V_n + \overline{A} B V_n + \abs{B}^2 V_n^2\right) \lvert U^{k-1}_{n-1} - u_{n-1}\rvert^2 \nonumber\\
    &\quad + \left(\abs{A}^2 + A \overline{B} V_n + \overline{A} B V_n + \abs{B}^2 V_n^2) \overline{(U^k_{n-1} - u_{n-1})}(U^{k-1}_{n-1}-u_{n-1})\right) \nonumber\\
    &\quad + \left.\left(\abs{A}^2 + A \overline{B} V_n + \overline{A} B V_n + \abs{B}^2 V_n^2\right) (U^k_{n-1} - u_{n-1}) \overline{(U^{k-1}_{n-1}-u_{n-1})} \right]\nonumber\\
    &= \left(\abs{A}^2 + \abs{B}^2\right) \left(\mathbb{E}\left[\lvert U^k_{n-1} - u_{n-1}\rvert^2\right] + \mathbb{E}\left[2\mathrm{Re}( (U^k_{n-1} - u_{n-1}) \overline{(u_{n-1}-U^{k-1}_{n-1})})\right] \right.\nonumber\\
    &\quad \left. + \mathbb{E}\left[\lvert U^{k-1}_{n-1} - u_{n-1}\rvert^2\right]\right)\nonumber\\
    &\le 2\gamma \left(\mathbb{E} \left[\lvert U^k_{n-1} - u_{n-1}\rvert^2 \right] + \mathbb{E}\left[\lvert U^{k-1}_{n-1} - u_{n-1}\rvert^2\right]\right).
\end{align}
    where $\gamma = \abs{A}^2 + \abs{B}^2 $. Substituting the result from equation \eqref{3.17} into equation \eqref{3.16} yields
\begin{equation}\label{3.19}
    \begin{aligned}
        \mathbb{E}\left[\lvert\alpha_n^k - u_n \rvert\right]^2 
        &\le 2 \mathbb{E}\left[\lvert U_n^k - u_n \rvert^2\right] + 4 \gamma \left( \mathbb{E}\left[\lvert U_{n-1}^k - u_{n-1} \rvert^2 \right] + \mathbb{E}\left[\lvert U_{n-1}^{k-1} - u_{n-1} \rvert^2\right] \right).
    \end{aligned}
\end{equation}
 Substituting equation \eqref{3.19} into equation \eqref{3.15} gives
\begin{equation}\label{3.20}
    \begin{aligned}
        \mathbb{E}\left[\lvert U_{n+1}^{k+1} - u_{n+1} \rvert^2\right]
        &\le \alpha \mathbb{E}\left[\lvert U_n^{k+1} - u_n \rvert^2\right] + 2\beta \mathbb{E}\left[\lvert U_n^k - u_n \rvert^2\right] \\
        &\quad + 4\beta \gamma \left( \mathbb{E}\left[\lvert U_{n-1}^k - u_{n-1} \rvert^2\right] + \mathbb{E}\left[\lvert U_{n-1}^{k-1} - u_{n-1}\rvert^2\right] \right).
    \end{aligned}
\end{equation}
Let $\epsilon_n^k = \mathbb{E}\left[\lvert U_{n+1}^{k+1} - u_{n+1} \rvert^2\right]$. Taking the supremum over \( n \) yields
\begin{equation}\label{3.22}
    \begin{aligned}
        \sup_n \epsilon_{n+1}^{k+1} &\le \alpha \sup_n \epsilon_n^{k+1} + 2\beta \sup_n \epsilon_n^k \\
        &\quad + 4\beta \gamma \sup_n \epsilon_{n-1}^k + 4\beta \gamma \sup_n \epsilon_{n-1}^{k-1}.
    \end{aligned}
\end{equation}
Let $\widehat{\epsilon}^{(k)} = \sup\limits_{n}\epsilon_n^k$. From equation \eqref{3.22}, we obtain
\begin{equation}\label{3.23}
    \begin{aligned}
        \widehat{\epsilon}^{(k+1)} &\le \alpha \widehat{\epsilon}^{(k+1)} + 2\beta \widehat{\epsilon}^{(k)} + 4\beta \gamma \widehat{\epsilon}^{(k)} + 4\beta \gamma \widehat{\epsilon}^{(k-1)} \\
        &\le \frac{2\beta\left(1 + 2\gamma\right)}{1 - \alpha} \widehat{\epsilon}^{(k)} + \frac{4\beta \gamma}{1 - \alpha} \widehat{\epsilon}^{(k-1)}.
    \end{aligned}
\end{equation}
It is easy to verify that  $\frac{2\beta\left(1 + 2\gamma\right)}{1 - \alpha}$ and $\frac{4\beta \gamma}{1 - \alpha}$ are nonnegative, thus from Lemma \ref{lemma}, we have
\begin{equation}\label{3.24}
    \widehat{\epsilon}^{(k)} \leq \widehat{\epsilon}^{(0)} \left[ \frac{\frac{2\beta\left(1 + 2\gamma\right)}{1 - \alpha} + \sqrt{\left( \frac{2\beta\left(1 + 2\gamma\right)}{1 - \alpha} \right)^2 + 4 \frac{4\beta \gamma}{1 - \alpha}}}{2} \right]^k.
\end{equation}
\end{proof}

\begin{Rem}
The proof for the sampling rule 4 follows the same reasoning as that for sampling rule 2, with the difference that $\mathbb{E}\left[\lvert\sqrt{3}(2\omega^k_n-1)\rvert^2\right] = 1$ is used in place of $\mathbb{E}\left[\lvert z^k_n\rvert^2\right] = 1$.
\end{Rem}

\begin{Coro}\label{Corollary}
\setstretch{1.25}
    Assume that \( M = \frac{\Delta T}{\Delta t} = 2 \) and that \( \lambda, \mu \in \mathbb{C} \) satisfy the condition:
    \begin{equation}\label{condition}
        \mathrm{Im}\left( A \overline{a}^2 + \sqrt{2} B \overline{ab} \right) = 0,
    \end{equation}
    where the variables $a$, $b$, $A$, and $B$ are defined in \eqref{3.2}. Let the sequence $\{u_n\}$ be generated serially through $\mathcal{F}_{\Delta t}$. Define the error sequence $\epsilon_n^k=\mathbb{E}\left[\lvert U^k_n-u_n \rvert^2\right]$
    denotes the expectation operator. The sequences $\{U_n^k\}$ and $\{\alpha_n^k\}$ are determined by the relation in \eqref{3.4}, and the term $\alpha_n^k$ is defined according to sampling rule 1 or 3. 
    Let $\widehat{\epsilon}^{(k)} = \sup\limits_{n} \epsilon_n^k$. If $\alpha < 1$, then the linear error bound of the stochastic differential equation \eqref{3.1} satisfies the following relation
    \begin{equation}\label{3.26}
        \widehat{\epsilon}^{(k)} \leq \widehat{\epsilon}^{(0)} \left[ \frac{ \frac{4\beta \gamma}{1 - \alpha} + \sqrt{ \left( \frac{4\beta \gamma}{1 - \alpha} \right)^2 + 4 \frac{2\beta \kappa + 4\beta \gamma}{1 - \alpha} } }{2} \right]^k,
    \end{equation}
    where
    \begin{equation}\label{parameters}
        \begin{aligned}
            \gamma &= \lvert A \rvert^2 + \lvert B \rvert^2, \\
            \kappa &= \left( \lvert a \rvert^2 + \lvert b \rvert^2 \right)^2, \\
            \alpha &= \lvert A \rvert^2 + \lvert B \rvert^2 + \lvert A \overline{a}^2 - \lvert B \rvert^2 - \lvert A \rvert^2 + \sqrt{2} B \overline{ab} \rvert, \\
            \beta &= -2 \mathrm{Re} \left( \overline{A} a^2 \right) + \lvert A \rvert^2 + \lvert B \rvert^2 - 2\sqrt{2} \mathrm{Re} \left( \overline{B} ab \right) \\
            &\quad + \left( \lvert a \rvert^2 + \lvert b \rvert^2 \right)^2 + \lvert A \overline{a}^2 - \lvert B \rvert^2 - \lvert A \rvert^2 + \sqrt{2} B \overline{ab} \rvert.
        \end{aligned}
    \end{equation}
\end{Coro}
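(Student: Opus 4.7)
The plan is to mirror the proof of Theorem \ref{Thm2.1} up through inequality \eqref{3.15}, which is completely independent of the choice of sampling rule: the derivation of
\begin{equation*}
    \epsilon_{n+1}^{k+1}\le \alpha\,\epsilon_{n}^{k+1} + \beta\,\mathbb{E}\!\left[\lvert \alpha_n^k - u_n\rvert^2\right]
\end{equation*}
uses only the SParareal recursion \eqref{3.4}, the Wiener-increment identities \eqref{e4}, and the hypothesis \eqref{condition}. So the whole novelty of Corollary \ref{Corollary} lies in how we control $\mathbb{E}[\lvert\alpha_n^k-u_n\rvert^2]$ under sampling rule 1 (the argument for rule 3 being identical save for replacing $z_n^k$ by $\sqrt{3}(2\omega_n^k-1)$, which has the same second moment).

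For sampling rule 1 we have $\alpha_n^k = \mathcal{F}_{\Delta t}(U^{k-1}_{n-1}) + \sigma_n^k\circ z_n^k$. I would first use $u_n = \mathcal{F}_{\Delta t}(u_{n-1})$ (where $\{u_n\}$ is the serial fine sequence) together with the linearity of the fine propagator in \eqref{34} to write
\begin{equation*}
    \mathcal{F}_{\Delta t}(U^{k-1}_{n-1}) - u_n = \prod_{j=0}^{1}\!\bigl(a+b\upsilon_{n-1+\tfrac{j}{2}}\bigr)\bigl(U^{k-1}_{n-1}-u_{n-1}\bigr).
\end{equation*}
Taking expectations, using the $\mathcal{F}_{(n-1)\Delta t}$-measurability of $U^{k-1}_{n-1}-u_{n-1}$ and the independence/standard-Gaussian properties of the $\upsilon$'s (giving $\mathbb{E}[|a+b\upsilon|^2]=|a|^2+|b|^2$ and independence across $j$), yields $\mathbb{E}[\lvert\mathcal{F}_{\Delta t}(U^{k-1}_{n-1}) - u_n\rvert^2] = \kappa\,\mathbb{E}[\lvert U^{k-1}_{n-1}-u_{n-1}\rvert^2]$ with $\kappa = (|a|^2+|b|^2)^2$. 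Combining with the cross-term bound $2\mathrm{Re}(\cdot)\le |\cdot|^2+|\cdot|^2$ and the already-established bound $\mathbb{E}[|\sigma_n^k|^2]\le 2\gamma\bigl(\mathbb{E}[\lvert U^k_{n-1}-u_{n-1}\rvert^2]+\mathbb{E}[\lvert U^{k-1}_{n-1}-u_{n-1}\rvert^2]\bigr)$ from \eqref{3.17}, I obtain
\begin{equation*}
    \mathbb{E}\!\left[\lvert\alpha_n^k - u_n\rvert^2\right] \le 2\kappa\,\epsilon^{k-1}_{n-1} + 4\gamma\bigl(\epsilon^{k}_{n-1}+\epsilon^{k-1}_{n-1}\bigr).
\end{equation*}

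Substituting this back into the sampling-rule-independent bound on $\epsilon_{n+1}^{k+1}$ gives
\begin{equation*}
    \epsilon_{n+1}^{k+1} \le \alpha\,\epsilon_n^{k+1} + 4\beta\gamma\,\epsilon^{k}_{n-1} + (2\beta\kappa + 4\beta\gamma)\,\epsilon^{k-1}_{n-1}.
\end{equation*}
Passing to the supremum in $n$ and absorbing the $\alpha\,\widehat{\epsilon}^{(k+1)}$ term on the left (valid because $\alpha<1$) yields the two-step recursion
\begin{equation*}
    \widehat{\epsilon}^{(k+1)} \le \tfrac{4\beta\gamma}{1-\alpha}\,\widehat{\epsilon}^{(k)} + \tfrac{2\beta\kappa+4\beta\gamma}{1-\alpha}\,\widehat{\epsilon}^{(k-1)}.
\end{equation*}
Both coefficients are nonnegative, so Lemma \ref{Gronwall_inequality} applied with $\tilde{A}=\tfrac{4\beta\gamma}{1-\alpha}$ and $\tilde{B}=\tfrac{2\beta\kappa+4\beta\gamma}{1-\alpha}$ delivers \eqref{3.26}. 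The main delicate point, and the only place where care beyond Theorem \ref{Thm2.1} is needed, is the first step: showing that $\mathcal{F}_{\Delta t}(U^{k-1}_{n-1})-u_n$ factors cleanly and that the resulting product of fine-step noise terms is independent of the $\mathcal{F}_{(n-1)\Delta t}$-measurable error $U^{k-1}_{n-1}-u_{n-1}$, so that the product expectation is simply $\kappa$ times the error's second moment. Everything downstream is algebraic bookkeeping of the same flavor as the Theorem \ref{Thm2.1} proof.
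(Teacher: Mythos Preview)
Your proposal is correct and follows essentially the same route as the paper's proof: both start from the sampling-rule-independent bound \eqref{3.15}, bound $\mathbb{E}[\lvert\alpha_n^k-u_n\rvert^2]$ under rule~1 by $2\mathbb{E}[\lvert\mathcal{F}_{\Delta t}(U^{k-1}_{n-1})-u_n\rvert^2]+2\mathbb{E}[\lvert\sigma_n^k\rvert^2]$, estimate the first term by $\kappa\,\epsilon^{k-1}_{n-1}$ via the factorization you describe, reuse \eqref{3.17} for the second, and finish with the supremum and Lemma~\ref{Gronwall_inequality}. Your use of the correct noise indices $\upsilon_{n-1+j/2}$ (the paper writes $\upsilon_{n+j/2}$ in \eqref{3.28}, an evident slip) and your observation that the $\kappa$ bound is in fact an equality are minor improvements, but the argument is the same.
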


\begin{proof}
    Applying sampling rule 1 to 
    $\mathbb{E}\left[\lvert\alpha ^{k}_n-u_{n}\rvert^2\right]$in equation \eqref{3.15} yields
    \begin{eqnarray}\label{3.27}
        \begin{aligned}
            \mathbb{E}\left[\lvert\alpha ^{k}_n-u_{n}\rvert^2\right]&=\mathbb{E}\left[\lvert \mathcal{F}_{\Delta t}(U^{k-1}_{n-1})+(\sigma^k_n \circ z^k_n)-u_n\rvert^2\right]\\
            &\le 2\mathbb{E}\left[\lvert \mathcal{F}_{\Delta t}(U^{k-1}_{n-1})-u_n\rvert^2\right]+2\mathbb{E}\left[\lvert\sigma^k_n \circ z^k_n\rvert^2\right]\\
            &\le 2\mathbb{E}\left[\lvert\mathcal{F}_{\Delta t}(U^{k-1}_{n-1})-u_n\rvert^2\right]+2\mathbb{E}\left[\lvert\sigma^k_n\rvert^2\right].
        \end{aligned}
    \end{eqnarray}
    In equation \eqref{3.27}, the term $\mathbb{E}\left[\lvert\sigma^k_n\rvert^2\right]$ has already been computed in equation \eqref{3.17}. We now proceed to analyze $\mathbb{E}\left[\lvert \mathcal{F}_{\Delta t}(U^{k-1}_{n-1})-u_n\rvert^2\right]$
    \begin{eqnarray}\label{3.28}
        \begin{aligned}
            \mathbb{E}\left[\lvert \mathcal{F}_{\Delta t}(U^{k-1}_{n-1})-u_n\rvert^2\right]
            &\le\mathbb{E}\left[\lvert(a+b\upsilon_{n+\frac{1}{2}})(a+b\upsilon_n)(U^{k-1}_{n-1}-u_{n-1})\rvert^2\right]\\
            &\le
            \kappa \mathbb{E}\left[\lvert U^{k-1}_{n-1}-u_{n-1}\rvert^2\right].
        \end{aligned}
    \end{eqnarray}
     where $\kappa =\left(\lvert a\rvert^2+\lvert b\rvert^2\right)^2 $. Substituting equations \eqref{3.28} and \eqref{3.17} into equation \eqref{3.15} results in
    \begin{align}\label{3.30}
        \mathbb{E}\left[\lvert U_{n+1}^{k+1}-u_{n+1}\rvert^2\right]&\le \alpha\mathbb{E}\left[\lvert U_{n}^{k+1}-u_{n}\rvert^2\right]+4\beta\gamma\mathbb{E}\left[\lvert U^k_{n-1}-u_{n-1}\rvert^2\right] \nonumber\\
        &+(2\beta\kappa+4\beta\gamma)\mathbb{E}\left[\lvert U^{k-1}_{n-1}-u_{n-1}\rvert^2\right].
    \end{align}
Let \(\epsilon_n^k = \mathbb{E}\left[ \lvert U_{n+1}^{k+1} - u_{n+1} \rvert^2 \right]\). Taking the supremum over \(n\), we obtain the following error bound
\begin{equation}\label{3.31}
    \sup_{n} \epsilon_{n+1}^{k+1} \leq \alpha \sup_{n} \epsilon_{n}^{k+1} + 4\beta\gamma \sup_{n} \epsilon_{n-1}^{k} + (2\beta\kappa + 4\beta\gamma) \sup_{n} \epsilon_{n-1}^{k-1}.
\end{equation}
    % Let $\epsilon_n^k=\mathbb{E}\left[\lvert U_{n+1}^{k+1}-u_{n+1}\rvert^2\right]$ taking the supremum over $n$ gives:
    % \begin{eqnarray}\label{3.31}
    %     \begin{aligned}
    %         \sup\limits_{n}\epsilon_{n+1}^{k+1}&\le \alpha \sup\limits_{n}\epsilon_{n}^{k+1}+4\beta\gamma\sup\limits_{n}\epsilon_{n-1}^{k}+(2\beta\kappa+4\beta\gamma)\sup\limits_{n}\epsilon_{n-1}^{k-1}.
    %     \end{aligned}
    % \end{eqnarray}
Let \(\widehat{\epsilon}^{(k)} = \sup\limits_{n} \epsilon_n^k\). From equation \eqref{3.31}, we derive the following error bound
\begin{equation}\label{3.32}
    \widehat{\epsilon}^{(k+1)} \leq \alpha \widehat{\epsilon}^{(k+1)} + 4\beta\gamma \widehat{\epsilon}^{(k)} + (2\beta\kappa + 4\beta\gamma) \widehat{\epsilon}^{(k-1)}.
\end{equation}
Simplifying further, we obtain
\begin{equation}
    \widehat{\epsilon}^{(k+1)} \leq \frac{4\beta\gamma}{1-\alpha} \widehat{\epsilon}^{(k)} + \frac{2\beta\kappa + 4\beta\gamma}{1-\alpha} \widehat{\epsilon}^{(k-1)}.
\end{equation}
It is easy to verify that $\frac{4\beta\gamma}{1-\alpha}$ and $\frac{2\beta\kappa + 4\beta\gamma}{1-\alpha}$ are nonnegative, thus from Lemma \ref{lemma}, we have
\begin{equation}\label{3.33}
    \widehat{\epsilon}^{(k)} \leq \widehat{\epsilon}^{(0)} \left[ \frac{\frac{4\beta\gamma}{1-\alpha} + \sqrt{\left( \frac{4\beta\gamma}{1-\alpha} \right)^2 + 4\frac{2\beta\kappa + 4\beta\gamma}{1-\alpha}}}{2} \right]^k.
\end{equation}   
\end{proof}

\begin{Rem}
    The proof for sampling rule 3 follows the same reasoning as for sampling rule 1, with the only difference being that \(\mathbb{E}\left[\lvert \sqrt{3}(2\omega^k_n - 1)\rvert^2\right] = 1\) is used in place of \(\mathbb{E}\left[\lvert z^k_n \rvert^2\right] = 1\).
\end{Rem}

% \begin{Rem}
%   The proof for sampling rule 3 follows in the same way as the proof for sampling rule 1, except that $\mathbb{E}\left[\lvert \sqrt{3}(2\omega^k_n-1)\rvert^2\right]=1$ is used in place of $\mathbb{E}\left[\lvert z^k_n \rvert^2\right]=1$.   
% \end{Rem}

\section{Numerical Experiments}

In this section, we use the SParareal algorithm to solve stochastic differential equations (SDEs). The parameters are set as $N = N_g$ and $M = \frac{N_f}{N_g} = \frac{\Delta T}{\Delta t} = 2$. Unless otherwise stated, the numerical error comparison plots for $P$ and $P_s$ are generated based on sampling rule 1. All numerical experiments are conducted with $m=2$ for 5 independent simulations.

%\subsection*{Convergence Criterion}
The convergence criterion for the SParareal algorithm is defined as:
\begin{equation}\label{stoping_criterion_epsilon}
    \lVert U^k_n - U^{k-1}_n \rVert_{\infty} \leq \varepsilon, \quad 0 \leq n \leq N,
\end{equation}
where $\varepsilon = 10^{-12}$ is used in this section.

During the numerical experiments, we evaluate the efficiency of the algorithm by comparing the number of iterations $k$ required to achieve a given error threshold $\rho$. Specifically, $k$ is the smallest integer satisfying the following condition:
\begin{equation}
    \mathbb{E}\lVert U^k_n - u_n \rVert_{\infty} \leq \rho, \quad 0 \leq n \leq N,
\end{equation}
where $\rho = 10^{-12}$ is used in this section.

\subsection{Linear Stochastic Differential Equation}
\begin{example}
In this section, we consider the Dahlquist test equation for numerical experiments:
\begin{equation}\label{eq:dahlquist}
    du(t) = \lambda u(t) dt + \mu u(t) dW(t), \quad t \in [0, T],
\end{equation}
with the initial condition $u(0) = 1$.
\end{example}

% We compare the numerical errors between the coarse and fine solvers under different sampling rules and analyze the impact of $\lambda$ and $\mu$ on the convergence efficiency of the SParareal algorithm. The results show that the SParareal algorithm significantly reduces the number of iterations compared to the classical method when a sufficient number of samples is used, while maintaining numerical errors within the expected theoretical range.

% This section presents numerical experiments conducted to evaluate the performance of the SParareal algorithm ($P_s$) compared to the deterministic Parareal algorithm ($P$). The experiments analyze convergence rates, error bounds, and the influence of sampling rules under varying parameters.

\subsection*{Case 1: SDEs with Real Coefficient}
%\subsection{Numerical Experiment: Convergence Analysis of $P$ and $P_s$}

To evaluate the performance of the SParareal algorithm ($P_s$) and compare it with the deterministic Parareal algorithm ($P$), we conduct numerical experiments under the following setup. The time interval $T \in [0,3]$ is discretized into $N=40$ sub-intervals, with a coarse time step $\Delta T = \frac{3}{40}$ and a fine time step $\Delta t = \frac{3}{80}$. The parameters are set as $\theta_g = 1$, $\theta_f = 0.5$, $\lambda = -40$, and $\mu = 0.56$. 

%\paragraph{Comparison of Iteration Counts.}
Figure~\ref{fig:subfig1} presents the numerical solutions computed by $P_s$ and $\mathcal{F}_{\Delta t}$, showing that their values are in close agreement. The error comparison in Figure~\ref{fig:subfig2} demonstrates that $P$ converges in $k=11$ iterations to meet the error tolerance $\varepsilon$, while $P_s$, using only two samples, converges in $k=8$ iterations. 

%\paragraph{Linear Error Bounds.}
Figure~\ref{fig:main1} compares the maximum theoretical error bound with the numerical errors of $P_s$, plotted against the iteration number $k$, under the condition $\alpha < 1$ as established in Theorem~\ref{Thm2.1}. 
Additionally, the numerical errors for sampling rules 2 and 4, as well as for sampling rules 1 and 3, are nearly indistinguishable, respectively.

% To investigate the theoretical error behavior, we analyze the linear bounds for perturbations defined by the sampling rules. Figure~\ref{fig:main1} plots the maximum theoretical error bound and numerical errors for $P_s$ against the iteration number $k$, satisfying the condition $\alpha < 1$ as discussed in Theorem~\ref{Thm2.1}. 
% Moreover, the numerical errors for both sampling rules 2/4 and 1/3 nearly overlap, respectively.
%The results indicate that the error decreases as $k$ increases. 
%However, the linear bounds are relatively loose due to maximization over $n$. 

%\paragraph{Impact of Sampling Size $m$.}
Figure~\ref{fig:mainu} illustrates the impact of varying the number of samples $m$ on the convergence behavior of the algorithm. As $m$ increases from 7 to 125, the required number of iterations $k$ decreases correspondingly. Specifically, for $m = 7$, $k$ reduces to 6; for $m = 20$, $k$ decreases to 5; and for $m = 125$, $k$ further reduces to 4. However, beyond this point, increasing $m$ to 1000 does not lead to a further reduction in $k$ but instead results in a gradual decrease in numerical errors. This observation underscores the diminishing returns of increasing $m$ and highlights the importance of balancing $m$ to optimize computational cost,  as discussed in \cite{2021Traffic}.

% Figure~\ref{fig:mainu} explores the effect of varying the number of samples $m$ on the convergence behavior. As $m$ increases from 7 to 125, the required number of iterations $k$ decreases accordingly. Specifically, for $m=7$, $k$ reduces to 6; for $m=20$, $k$ decreases to 5; and for $m=125$, $k$ further reduces to 4. Beyond this point, increasing $m$ to 1000 does not reduce $k$ further but instead results in a gradual decrease in numerical errors. This diminishing return highlights the importance of balancing $m$ to optimize computational cost, particularly when computational resources are limited, just as pointed in \cite{2021Traffic}.

\begin{figure}[htbp]
    \subfigure[Numerical Solutions]{
        \includegraphics[width=0.45\textwidth]{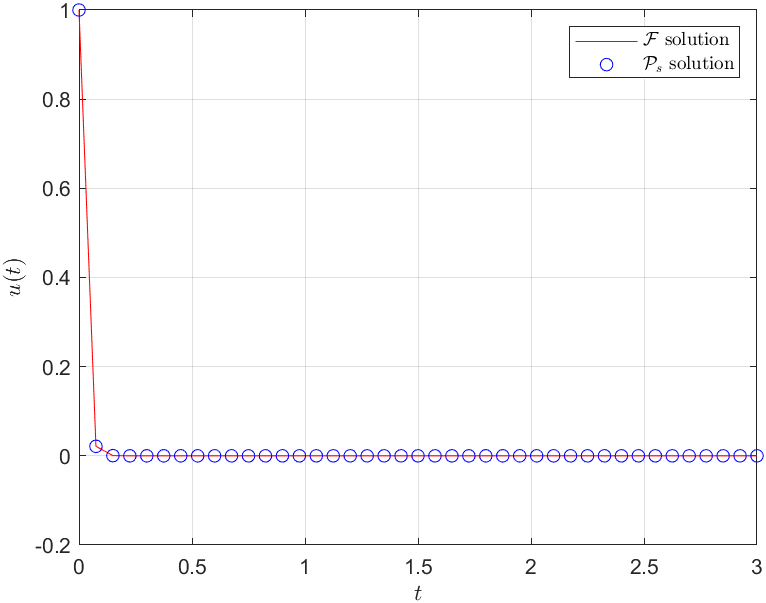}
        \label{fig:subfig1}
    }
    \subfigure[Numerical Errors]{
        \includegraphics[width=0.45\textwidth]{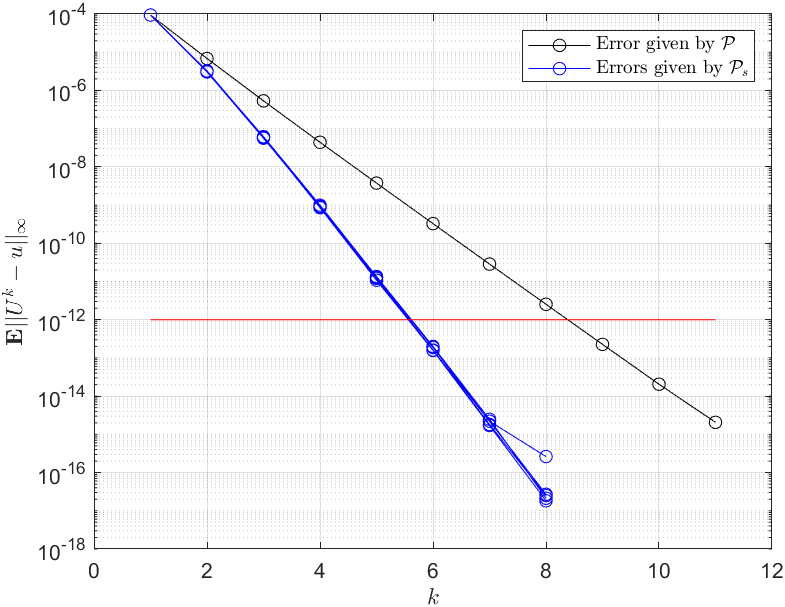}
        \label{fig:subfig2}
    }
    \captionsetup{font={footnotesize}}
    \caption{
    \subref{fig:subfig1} Numerical solutions obtained by $\mathcal{F}_{\Delta t}$ and $P_s$. Only one time-independent simulation is shown for clarity. \subref{fig:subfig2} Numerical errors for $P$ (black line) and $P_s$ (blue line). The horizontal red dashed line indicates the error threshold $\rho = 10^{-12}$.
    }
\end{figure}

\begin{figure}[htbp]
    \subfigure[Sampling Rules 1 and 3]{
        \includegraphics[width=0.45\textwidth]{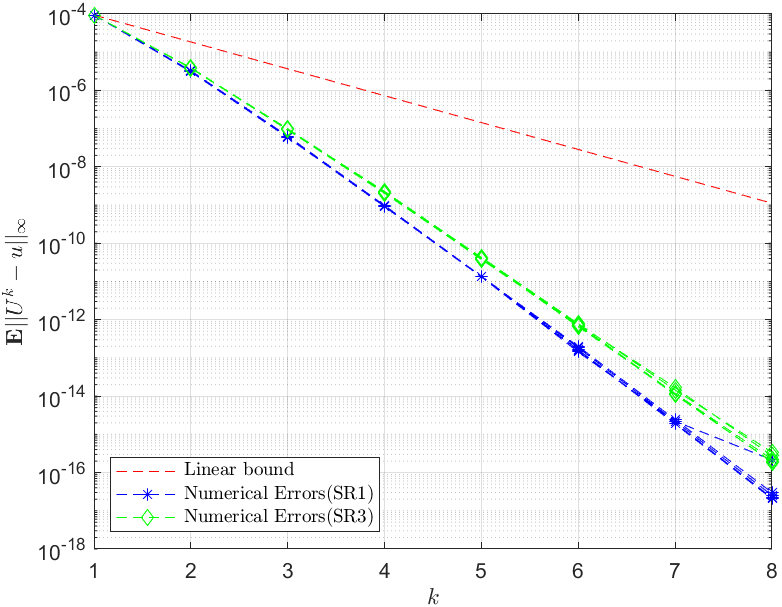}
        \label{fig:subfig3}
    }
    \subfigure[Sampling Rules 2 and 4]{
        \includegraphics[width=0.45\textwidth]{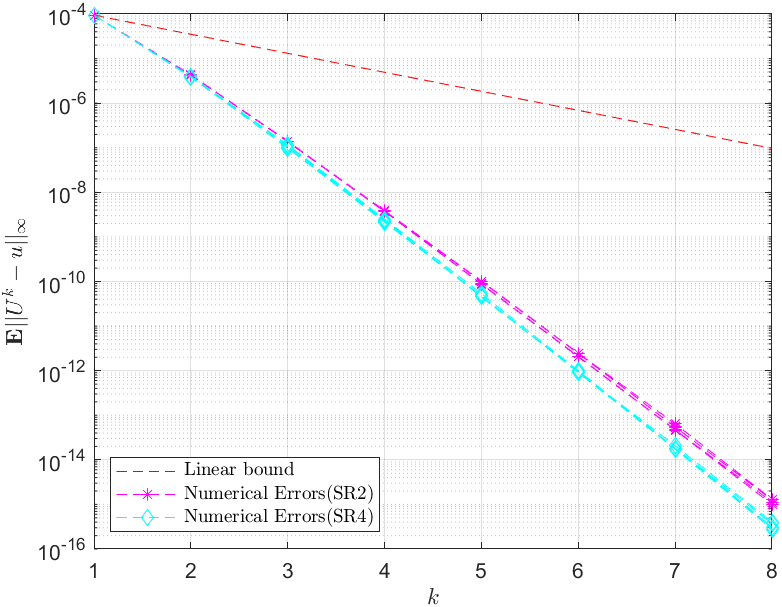}
        \label{fig:subfig4}
    }
    \captionsetup{font={footnotesize}}
    \caption{
    Linear error bounds from Corollary~\ref{Corollary} (red line) compared with numerical errors.
    \subref{fig:subfig3} Errors for sampling rules 1 and 3 (blue and green lines). 
    \subref{fig:subfig4} Errors for sampling rules 2 and 4 (purple and cyan lines). 
    }
    \label{fig:main1}
\end{figure}

\begin{figure}[htbp]
    \centering
    \subfigure[$m=2$]{
        \includegraphics[width=0.3\textwidth]{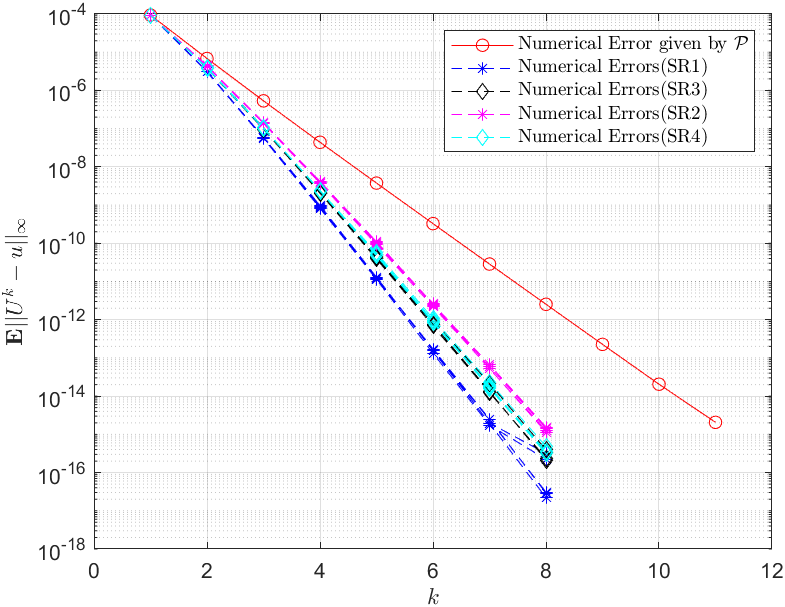}
    }
    \subfigure[$m=7$]{
        \includegraphics[width=0.3\textwidth]{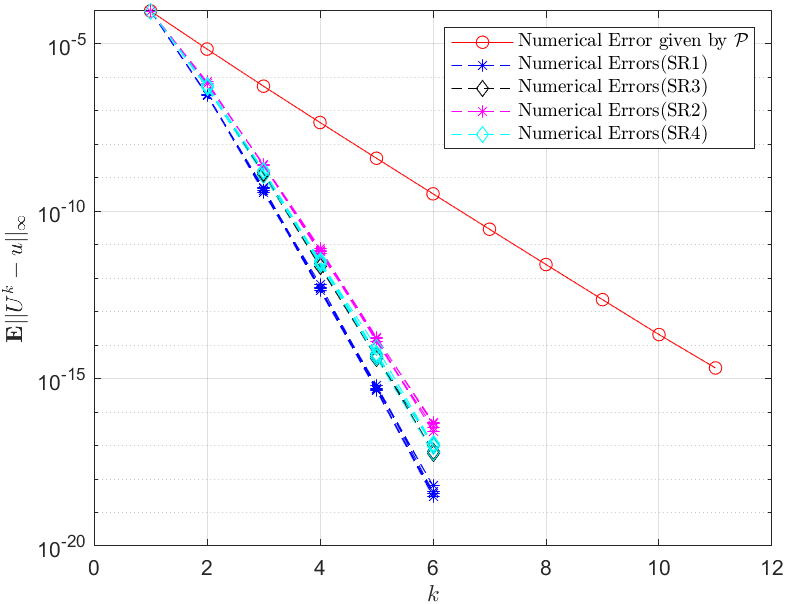}
    }
    \subfigure[$m=20$]{
        \includegraphics[width=0.3\textwidth]{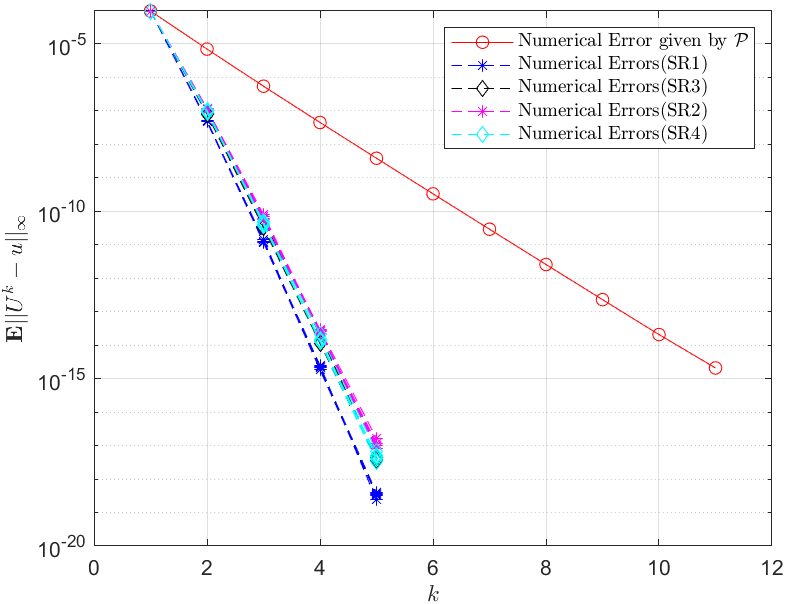}
    }
    
    \subfigure[$m=125$]{
        \includegraphics[width=0.3\textwidth]{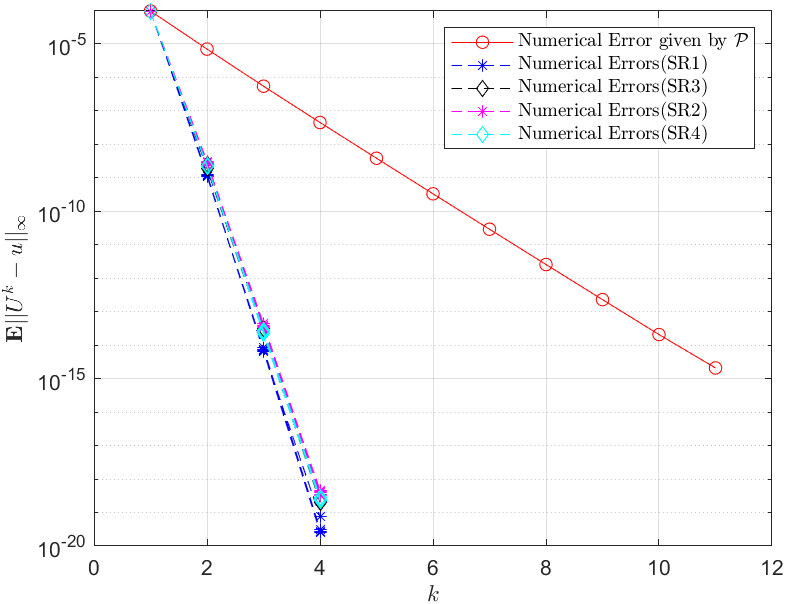}
    }
    \subfigure[$m=500$]{
        \includegraphics[width=0.3\textwidth]{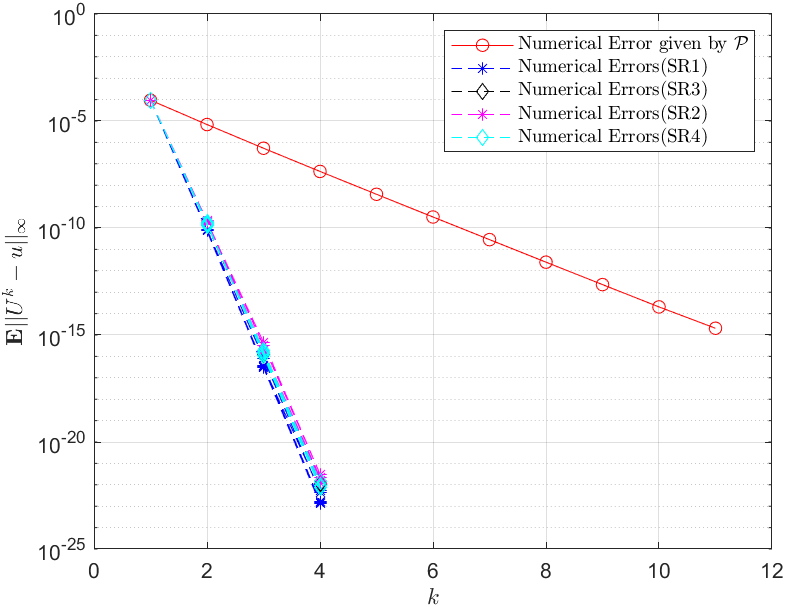}
    }
    \subfigure[$m=1000$]{
        \includegraphics[width=0.3\textwidth]{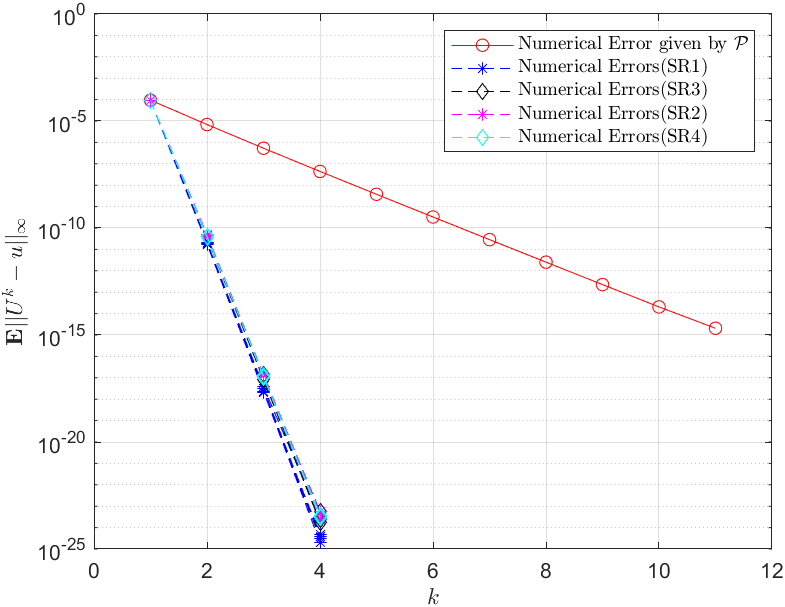}
    }
    \captionsetup{font={footnotesize}}
    \caption{
    Numerical errors for $P_s$ with varying sample sizes $m$. Sampling rules 1/3 are represented in blue and black, while sampling rules 2/4 are shown in purple and cyan. From left to right, $m$ takes values 2, 7, 20, 125, 500, and 1000.
    }
    \label{fig:mainu}
\end{figure}

\subsection*{Case 2: SDEs with Extended Time Interval }

In this experiment, we evaluate the convergence behavior of the SParareal algorithm ($P_s$) and the deterministic Parareal algorithm ($P$) over an extended time interval $T \in [0,9]$. The time interval is discretized into $N=40$ sub-intervals, with a coarse time step   $\Delta T = \frac{9}{40}$ and a fine time step $\Delta t = \frac{9}{80}$. The parameters are set as $\theta_g = 0.5$, $\theta_f = 1$, $\lambda = -40$, and $\mu = 0.56$.

%\paragraph{Comparison of Convergence Rates.}
% At $m=2$, the convergence factors for $P_s$ using sampling rules 1/3 and 2/4 are $9.5820$ and $19.7644$, respectively.
Figure~\ref{fig:main3} illustrates the numerical errors and convergence behavior under these parameters. For $P_s$, all sampling rules converge in $k=27$ iterations with $m=2$. Increasing the number of samples  $m$ improves the convergence rate, highlighting the effectiveness of $P_s$ under larger $m$. In contrast, $P$ requires $k=39$ iterations to meet the stopping criterion \eqref{stoping_criterion_epsilon}.  Moreover, the linear error bound (see \eqref{3.7} and \eqref{3.26}) for $P_s$ does not converge under these extended parameters, as shown by the red dotted line in Figure~\ref{fig:main3}.

%When a stricter stopping criterion is imposed, $P$ may fail to converge even after reaching the maximum number of iterations $k=N$, emphasizing its inefficiency compared to $P_s$ under the same conditions.

%\paragraph{Behavior of Linear Error Bounds.}

%This is consistent with observations in the literature that linear bounds may become less predictive for larger time intervals or more complex sampling rules. Further refinement of theoretical bounds may be required for accurate error characterization in such scenarios.

\begin{figure}[htbp]
    \subfigure[Numerical Error]{
        \includegraphics[width=0.3\textwidth]{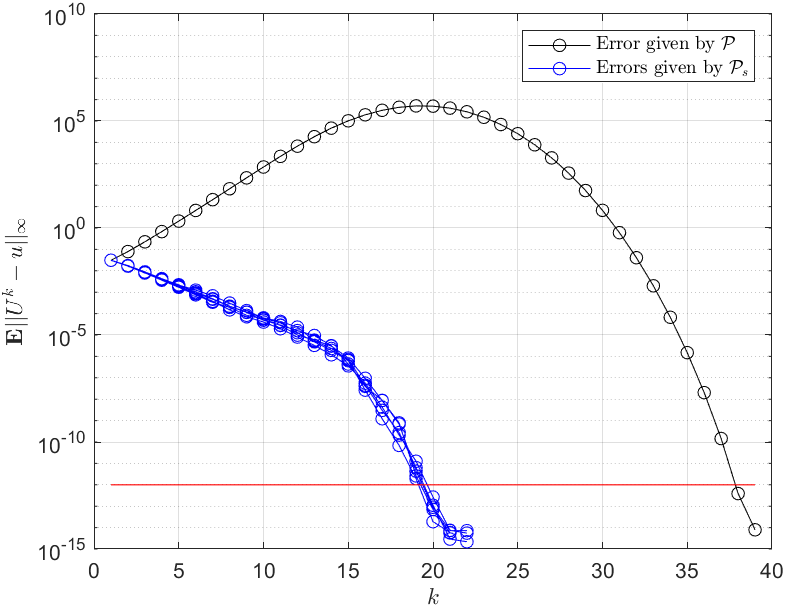}
        \label{fig:subfig6}
    }
    \subfigure[Sampling Rules 1 and 3]{
        \includegraphics[width=0.3\textwidth]{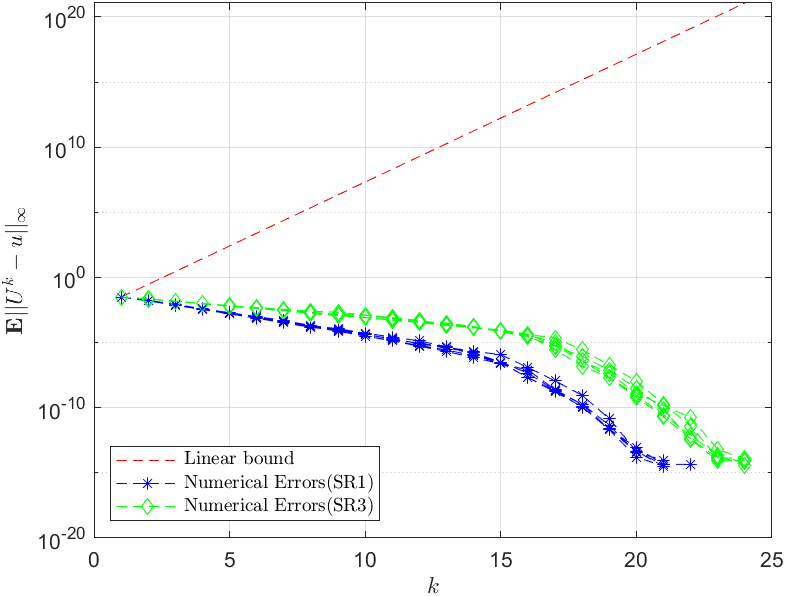}
        \label{fig:subfig7}
    }
    \subfigure[Sampling Rules 2 and 4]{
        \includegraphics[width=0.3\textwidth]{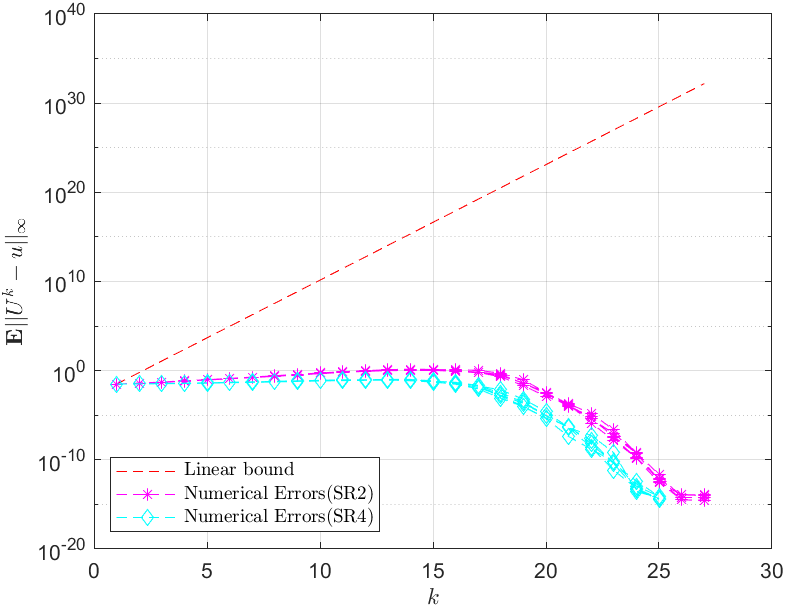}
        \label{fig:subfig8}
    }
    \captionsetup{font={footnotesize}}
    \caption{
    Numerical errors and convergence behavior for $P_s$ and $P$ with $T \in [0,9]$, $\Delta T = \frac{9}{40}$, and $\Delta t = \frac{9}{80}$. 
    \subref{fig:subfig6}  Comparsion of $P$ and $P_s$. 
    \subref{fig:subfig7} Numerical errors for sampling rules 1 and 3 (blue and green lines). 
    \subref{fig:subfig8} Numerical errors for sampling rules 2 and 4 (purple and cyan lines).}
    \label{fig:main3}
\end{figure}

\subsection*{Case 3: SDEs with Imaginary Coefficient}
In this experiment, the time interval $T \in [0,3]$ is discretized into $N=40$ sub-intervals, with a coarse time step $\Delta T = \frac{3}{40}$ and a fine time step $\Delta t = \frac{3}{80}$. The parameters are set as $\theta_g=1$, $\theta_f=0.5$, $\lambda=-40$, and $\mu=0.56 + i$.

% \paragraph{Comparison of Numerical Results.}
% The numerical simulation results reveal that the behavior of $P_s$ is largely consistent regardless of whether $\mu$ is real or imaginary. However, when $\mu$ is imaginary, minor differences in numerical errors are observed between sampling rules, which are less pronounced when $\mu$ is real. These differences are illustrated in Figure~\ref{fig:main4}.

%\paragraph{Behavior of Sampling Rules.}
As can be clearly observed in Figure~\ref{fig:subfig10}, the convergence rate of the stochastic  parareal algorithm 
$P_s$ is faster than that of the standard parareal algorithm $P$. And
 for sampling rules 1 and 3, Rule 1 exhibits an advantage over Rule 3 when $\mu$
 is imaginary. In contrast, for sampling rules 2 and 4, their convergence speed are comparable.

% For sampling rules 1 and 3, rule 1 demonstrates a clear advantage over rule 3 when $\mu$ is imaginary, while such differences are negligible for real $\mu$. Similarly, for sampling rules 2 and 4, rule 2 initially outperforms rule 4 by a slight margin when $\mu$ is imaginary. However, as the number of iterations $k$ increases, this advantage diminishes, and rule 4 eventually surpasses rule 2 by a small margin. This phenomenon does not occur for real $\mu$, where rule 4 consistently exhibits slightly superior performance compared to rule 2.
%{\color{red}These results indicate that while the choice of $\mu$ has minimal impact on the overall convergence of $P_s$, the relative performance of sampling rules may vary depending on whether $\mu$ is real or imaginary.}
\begin{figure}[htbp]
    \subfigure[Numerical Errors]{
        \includegraphics[width=0.3\textwidth]{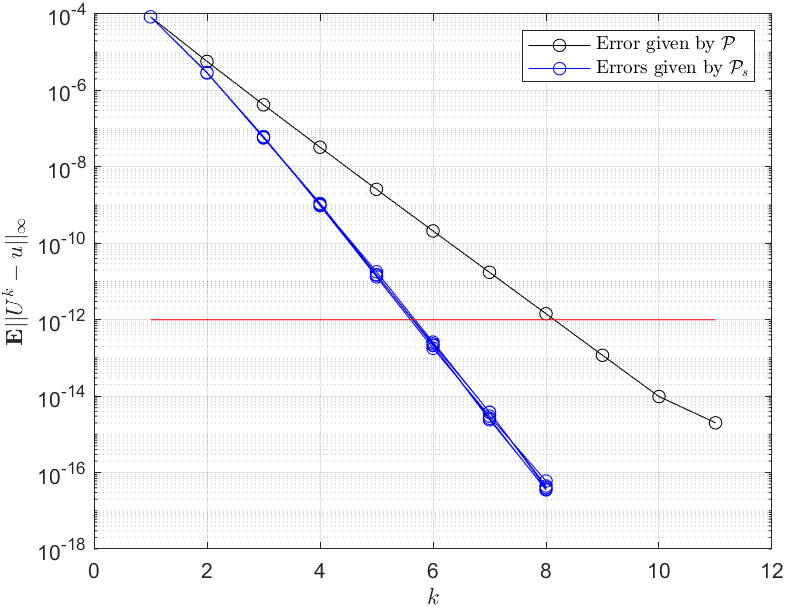}
        \label{fig:subfig10}
    }
    \subfigure[Sampling Rules 1 and 3]{
        \includegraphics[width=0.3\textwidth]{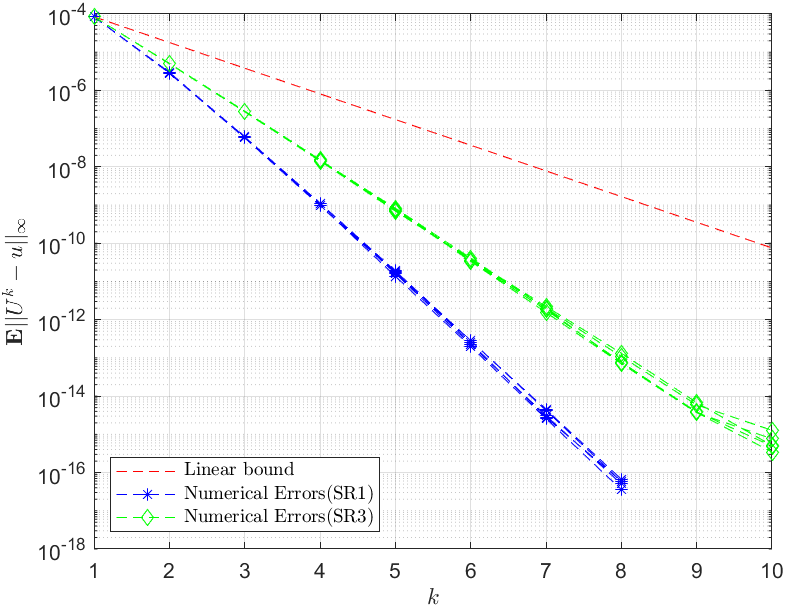}
        \label{fig:subfig11}
    }
    \subfigure[Sampling Rules 2 and 4]{
        \includegraphics[width=0.3\textwidth]{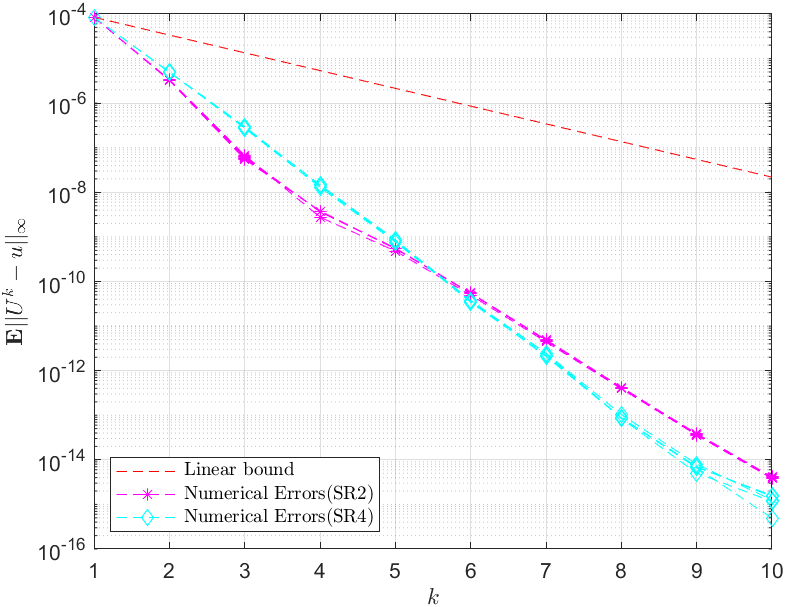}
        \label{fig:subfig12}
    }
    \captionsetup{font={footnotesize}}
    \caption{
    Numerical errors for $P_s$ with $\mu = 0.56 + i$ (imaginary $\mu$). 
    \subref{fig:subfig10} Comparsion of $P$ and $P_s$.  \subref{fig:subfig11} Comparison of sampling rules 1 and 3. 
    \subref{fig:subfig12} Comparison of sampling rules 2 and 4. }
    \label{fig:main4}
\end{figure}

\subsection{Nonlinear SDEs}
\begin{example}
\textbf{Phase Transition Model} \par
The Ginzburg-Landau equation, widely used to describe phase transition phenomena in superconductivity \cite{Ginzburg2009}. The stochastic variant of this model is formulated as:
\begin{eqnarray}\label{Phase_Transition_eq}
    dX(t) = \left((\upsilon + \frac{1}{2}\sigma^2) X(t) - \lambda X(t)^3 \right) dt + \sigma X(t) dW(t),
\end{eqnarray}
where the parameters satisfy $\upsilon \geq 0$, $\lambda > 0$, and $\sigma > 0$. For the initial condition $X(0) > 0$, the exact solution can be expressed as:
\begin{eqnarray}
    X(t) = \frac{X(0) e^{\upsilon t + \sigma W(t)}}{\sqrt{1 + 2 X(0)^2 \lambda \int_0^t e^{2\upsilon s + 2\sigma W(s)} ds}}.
\end{eqnarray}\par
\end{example}
For the numerical experiments, we set the initial condition $X(0) = 1$, with the time interval $T \in [0, 1]$, coarse time step $\Delta T = \frac{1}{40}$, and fine time step $\Delta t = \frac{1}{80}$. The remaining parameter values are chosen as follows: $\theta_g = 0$, $\theta_f = 0$, $\upsilon = 25$, $\sigma = 0.5$, and $\lambda = 0.1$.

The numerical solution of the stochastic phase transition model $P_s$ is shown in Figure  \ref{Nolinear/Phase Transition Model1}. In Figure \ref{Nolinear/Phase Transition Model2}, it is observed that the method $P$ converges to a solution within $k = 22$ iterations under the specified error tolerance $\varepsilon$, while the method $P_s$ converges in $k = 16$ iterations under the same tolerance.

We further analyze the relationship between the number of iterations $k$ and the number of samples $m$ used for the stochastic phase transition model \eqref{Phase_Transition_eq}. From Figure \ref{fig:main Nolinear/Phase Transition Model}, we observe that as $m$ increases, the required number of iterations $k$ decreases correspondingly. Moreover, when $m = 100$, further increases in $m$ do not result in noticeable changes in the number of iterations $k$, but instead lead to a reduction in numerical errors.

    \begin{figure}[htbp]
     \subfigure[Numerical solutions]{
        \includegraphics[width=0.3\textwidth]{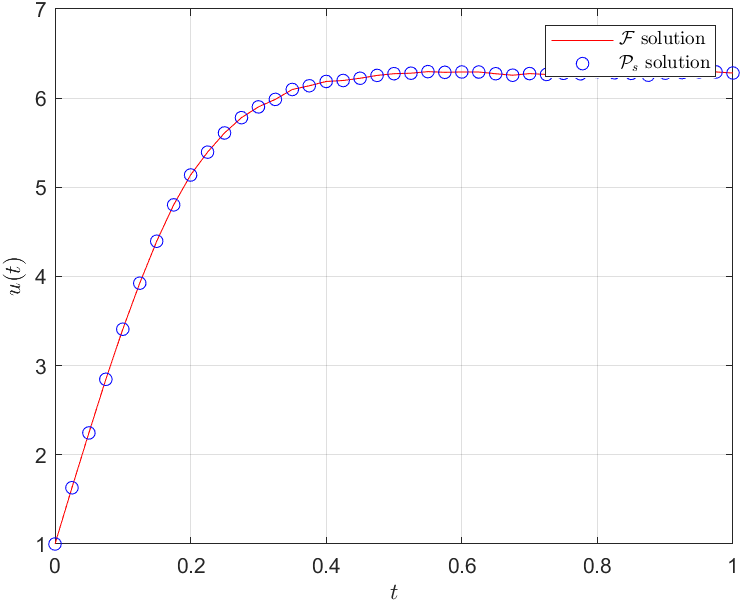}
        \label{Nolinear/Phase Transition Model1}
    }
        \subfigure[Sampling Rules 1 and 3 ]{
        \includegraphics[width=0.3\textwidth]{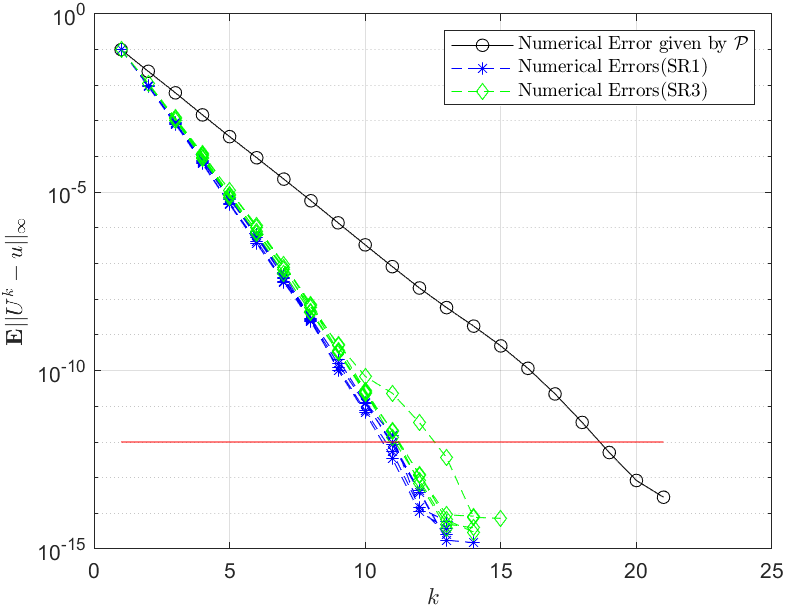}
        \label{Nolinear/Phase Transition Model2}
    }
         \subfigure[Sampling Rules 2 and 4]{
         \includegraphics[width=0.3\textwidth]{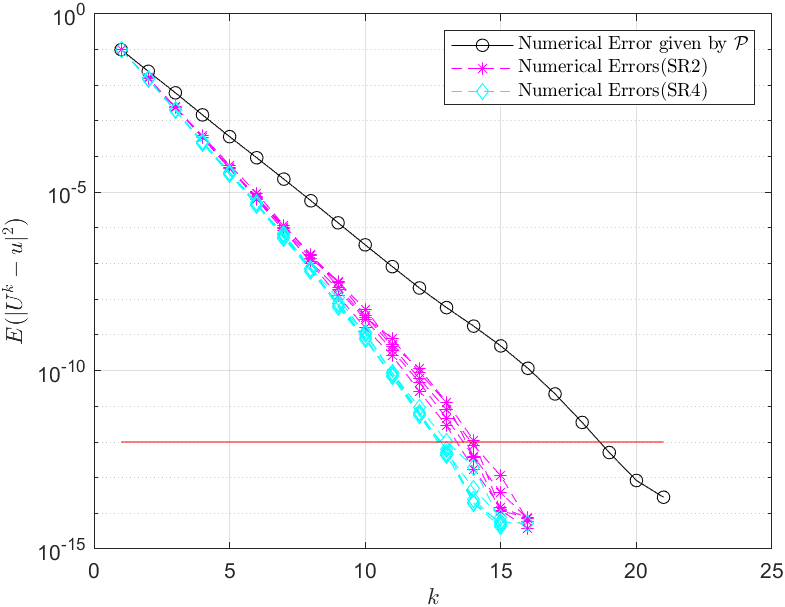}
        \label{Nolinear/Phase Transition Model3}
    }
    \captionsetup{font={footnotesize}}
    \caption{
    \subref{Nolinear/Phase Transition Model1} Numerical solutions obtained by $\mathcal{F}_{\Delta t}$ and $P_s$. \subref{Nolinear/Phase Transition Model2} Comparsion of $P$, $P_s$ with sampling rules 1 and 3. 
    \subref{Nolinear/Phase Transition Model3} Comparsion of $P$, $P_s$ with sampling rules 2 and 4. The horizontal red dashed line indicates the error threshold $\rho = 10^{-12}$.
    }
    \label{Nolinear/Phase Transition Model}
    \end{figure}
    
    \begin{figure}[htbp]
     \subfigure[m=2]{
         \includegraphics[width=0.3\textwidth]{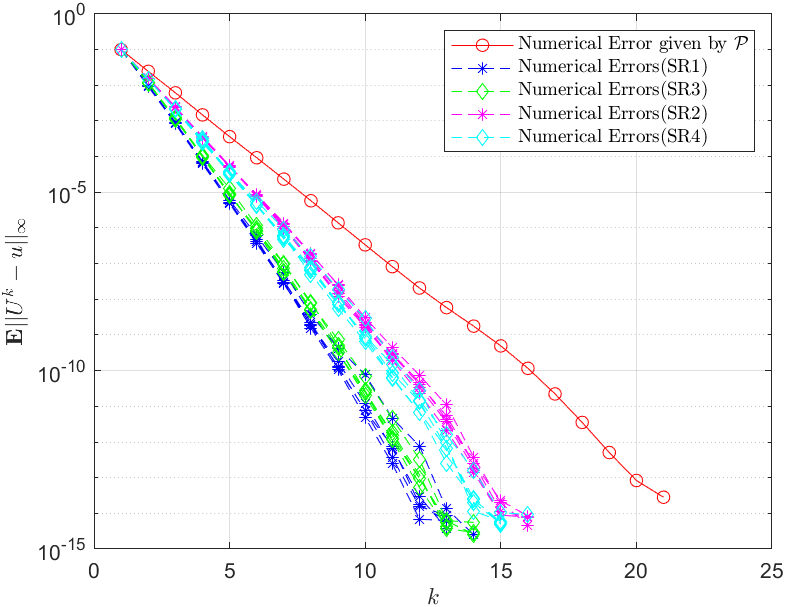}
         \label{Nolinear/Phase Transition Modelm=2}
    }
        \subfigure[m=4]{
        \includegraphics[width=0.3\textwidth]{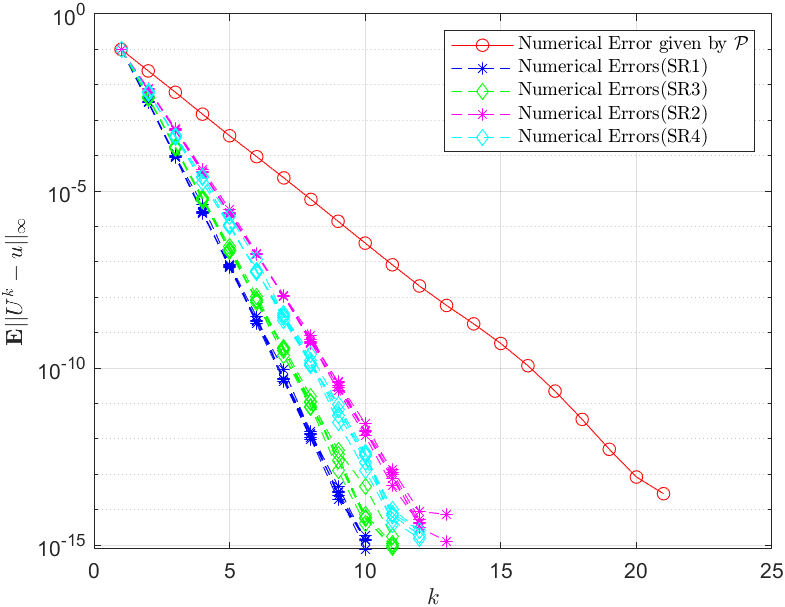}
        \label{Nolinear/Phase Transition Modelm=4}
    }
     \subfigure[m=9]{
         \includegraphics[width=0.3\textwidth]{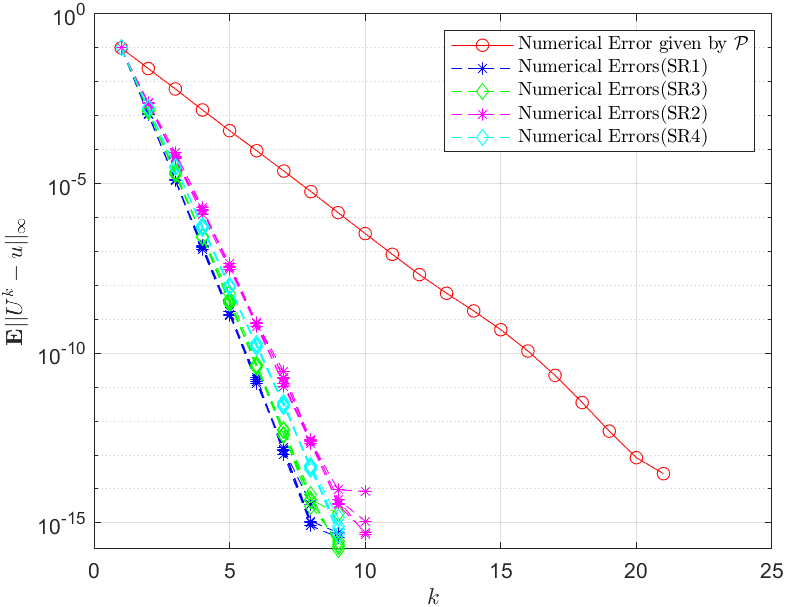}
         \label{Nolinear/Phase Transition Modelm=9}
    }
    
     \subfigure[m=20]{
        \includegraphics[width=0.3\textwidth]{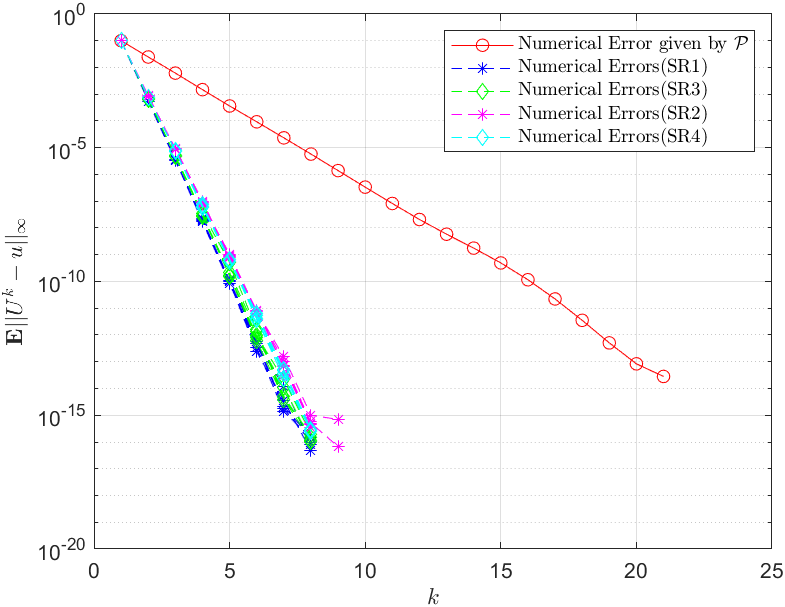}
        \label{Nolinear/Phase Transition Modelm=20}
    }
         \subfigure[m=100]{
        \includegraphics[width=0.3\textwidth]{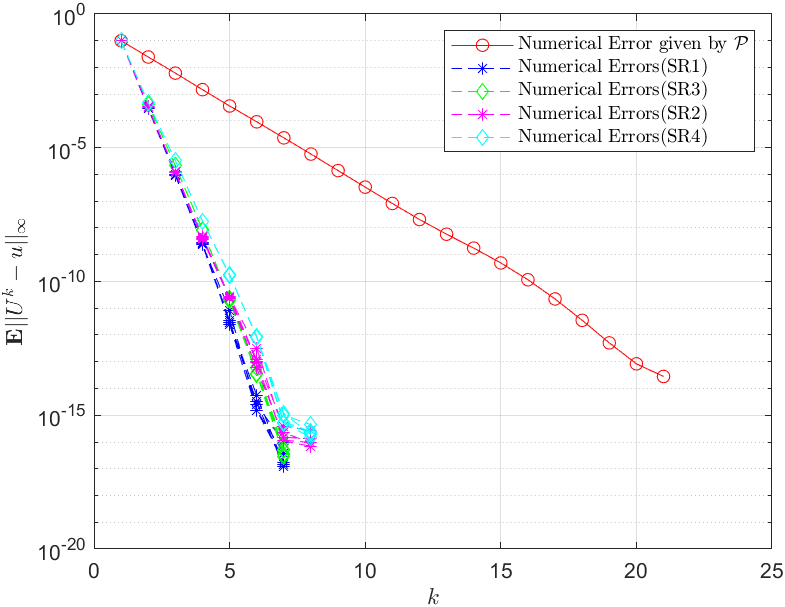}
         \label{Nolinear/Phase Transition Modelm=100}
    }
    \subfigure[m=1000]{
         \includegraphics[width=0.3\textwidth]{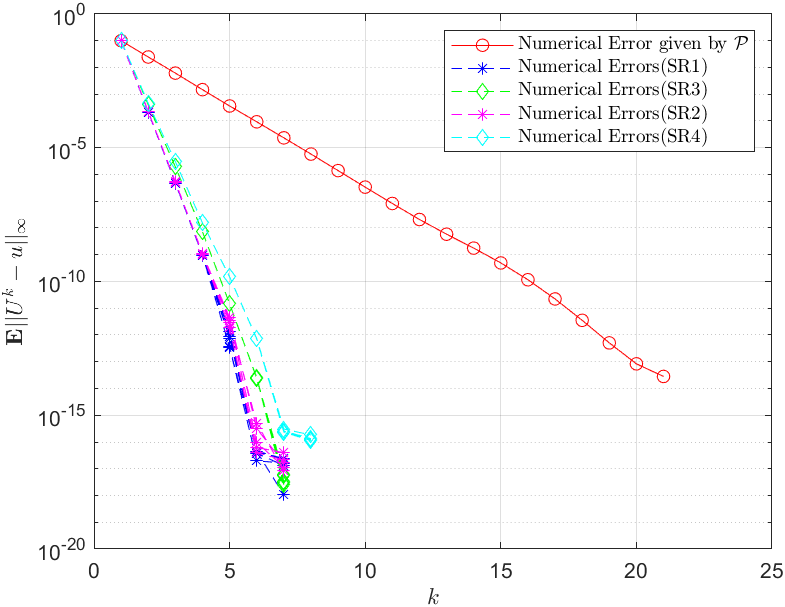}
         \label{Nolinear/Phase Transition Modelm=1000}
    }
    \captionsetup{font={footnotesize}}
    \caption{
    Numerical errors of $P_s$ for varying sample sizes $m$. Results for sampling rules 1 and 3 are represented by blue and green lines, respectively, while sampling rules 2 and 4 are depicted in purple and cyan. From left to right, the number of samples $m$ are set to 2, 4, 9, 11, 20, and 1000.
    }
    \label{fig:main Nolinear/Phase Transition Model}
    \end{figure}

\begin{example}
    \textbf{Double-Well Potential} \cite{doi:10.1137/050638667} \par
    The ordinary differential equation (ODE) $\frac{dx}{dt} = -V'(x(t))$, where the function $V$ is bounded below, is referred to as a potential. By applying the chain rule, we derive the following relationship:
    \begin{eqnarray}
        \frac{d}{dt} V(x(t)) = V'(x(t)) \frac{dx(t)}{dt} = -(V'(x(t)))^2.
    \end{eqnarray}
    This indicates that along any solution trajectory, the potential energy $V(x(t))$ is non-increasing. Moreover, $V(x(t))$ is strictly decreasing unless the solution reaches a stationary point. For the case of a double-well potential, we define:
    \begin{eqnarray}\label{4.6}
        V(x) = x^2(x - 2)^2.
    \end{eqnarray}

    The behavior of the solution depends on the initial condition. For $x(0) < 0$, the numerical solution of the ODE monotonically decreases to $0$, sliding along the left branch of the left well. Similarly:
    \begin{itemize}
        \item For $0 < x(0) < 1$, the solution descends along the right branch of the left well.
        \item For $1 < x(0) < 2$, the solution moves down the left branch of the right well.
        \item For $x(0) > 2$, the solution slides down the right branch of the right well.
    \end{itemize}
    When noise is introduced into the problem, the solution can overcome the potential barrier and cross the central peak, transitioning from one well to another. The general form of the stochastic differential equation with additive noise is given by:
    \begin{eqnarray}
        dX(t) = -V'(X(t)) dt + \sigma dW(t),
    \end{eqnarray}
    where $\sigma$ is a constant. For the double-well potential defined in equation \eqref{4.6}, this becomes:
    \begin{eqnarray}\label{4.9}
        dX(t) = (-8X(t) + 12X(t)^2 - 4X(t)^3) dt + \sigma dW(t).
    \end{eqnarray}
\end{example}

In the numerical experiments, we set the initial condition $u(0) = 1$, with $N = 40$, and consider the time interval $T \in [0, 1]$. The coarse time step is chosen as $\Delta T = \frac{1}{40}$, and the fine time step is $\Delta t = \frac{1}{80}$. The remaining parameters are set as $\theta_g = 0$, $\theta_f = 0$, and $\sigma = 4$.

Figure \ref{Nolinear/Double-Well Potential1} demonstrates that the numerical solutions computed by $P_s$ and $\mathcal{F}_{\Delta t}$ are in close agreement. Furthermore, Figures \ref{Nolinear/Double-Well Potential2} and \ref{Nolinear/Double-Well Potential3} indicate that the advantages of $P_s$ over $P$ are not evident under the given conditions. For the same number of iterations, the numerical errors of $P_s$ and $P$ are very similar.

    \begin{figure}[htbp]
    \subfigure[Numerical solutions]{
        \includegraphics[width=0.3\textwidth]{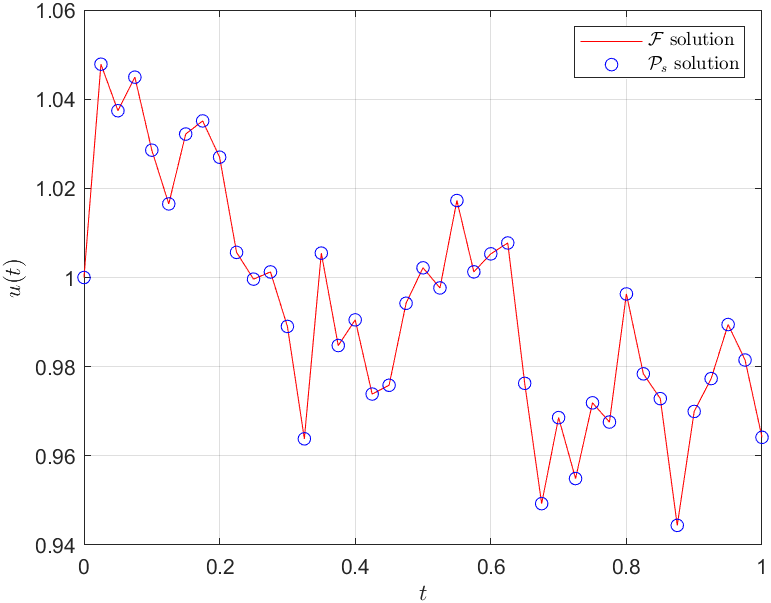}
        \label{Nolinear/Double-Well Potential1}
    }
        \subfigure[Sampling Rules 1 and 3]{
        \includegraphics[width=0.3\textwidth]{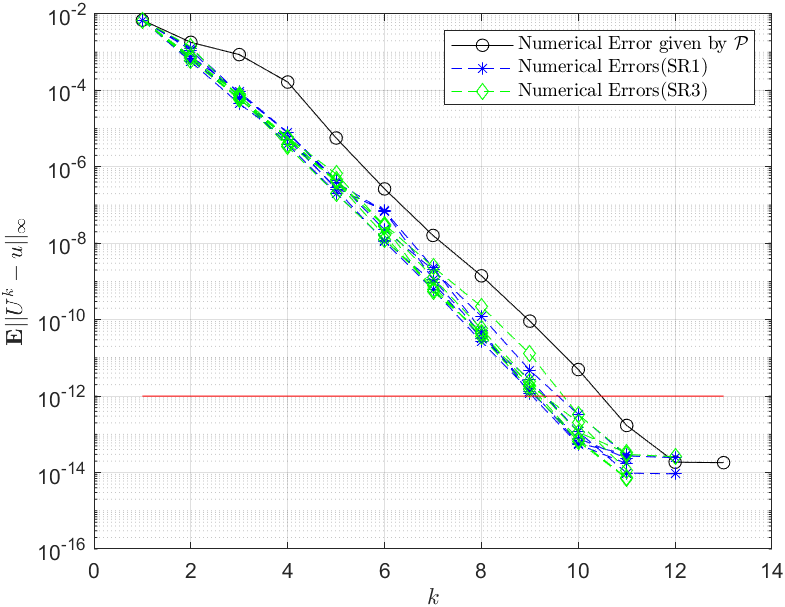}
        \label{Nolinear/Double-Well Potential2}
    }
         \subfigure[Sampling Rules 2 and 4]{
         \includegraphics[width=0.3\textwidth]{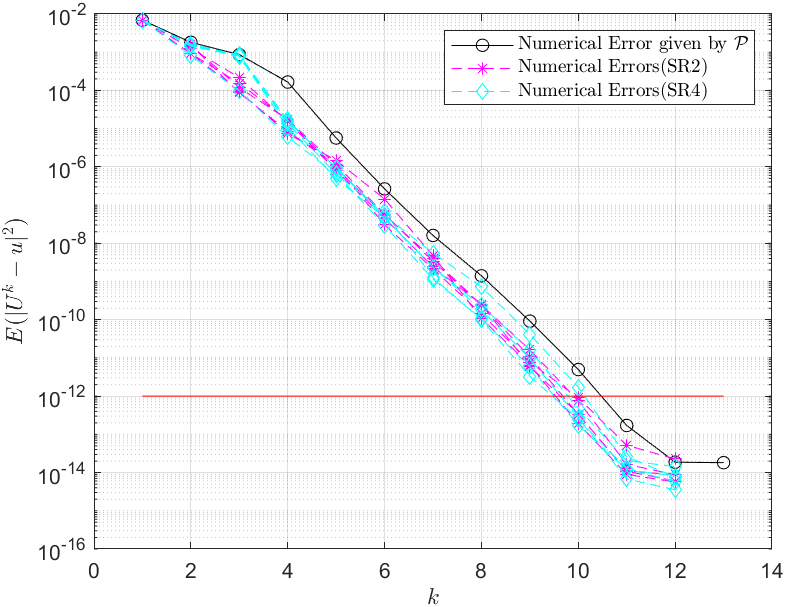}
        \label{Nolinear/Double-Well Potential3}
    }
    \captionsetup{font={footnotesize}}
    \caption{
   \subref{Nolinear/Double-Well Potential1} Numerical solutions obtained by $\mathcal{F}_{\Delta t}$ and $P_s$. \subref{Nolinear/Double-Well Potential2} Comparsion of $P$, $P_s$ with sampling rules 1 and 3. \subref{Nolinear/Double-Well Potential3} Comparsion of $P$,  
    $P_s$ with sampling rules 2 and 4. 
    }
    \label{Nolinear/Double-Well Potential}
    \end{figure}

When $\sigma = 20$, the impact of the stochastic term increases. We observe that both the $P$ algorithm and the $P_s$ algorithm based on the explicit Euler method  do not converge. We hypothesize that this is due to the explicit Euler method failing to converge, both in the strong and weak sense, when solving nonlinear stochastic differential equations (SDEs) \cite{Hutzenthaler2011EMdivergence}. Based on the methods proposed in \cite{Beyn2016projectedEM,Min2020projectedEMSDDEs,Min2021projectedEMjumps}, we introduce the following projected Euler method:

\begin{equation}
    \begin{aligned}
        \overline{U}_{n-1} &= \min\left(1, \delta t^{-\frac{1}{4}} \lvert U_{n-1} \rvert^{-1}\right) U_{n-1}, \\
        U_n &= \overline{U}_{n-1} + f(\overline{U}_{n-1}) \delta t + g(\overline{U}_{n-1}) \Delta W_{n},
    \end{aligned}
\end{equation}
where $W_n=W(t_{n})-W(t_{n-1})$ represents the increment of the Brownian motion.

We then investigate the convergence of the $P$ and $P_s$ algorithms based on the above projected Euler scheme. Figure \ref{Nolinear/Double-Well Potential PEM1} shows that the numerical solutions computed by $P_s$ and $\mathcal{F}_{\Delta t}$ are in close agreement. Figures \ref{Nolinear/Double-Well Potential PEM2} and \ref{Nolinear/Double-Well Potential PEM3} demonstrate that $P_s$ reaches the stopping criterion $\varepsilon$ faster than $P$. Specifically, for the same error tolerance, the number of iterations $k$ required by $P_s$ is smaller than that required by $P$.

     \begin{figure}[htbp]
    \subfigure[Numerical solutions]{
    \includegraphics[width=0.3\textwidth]{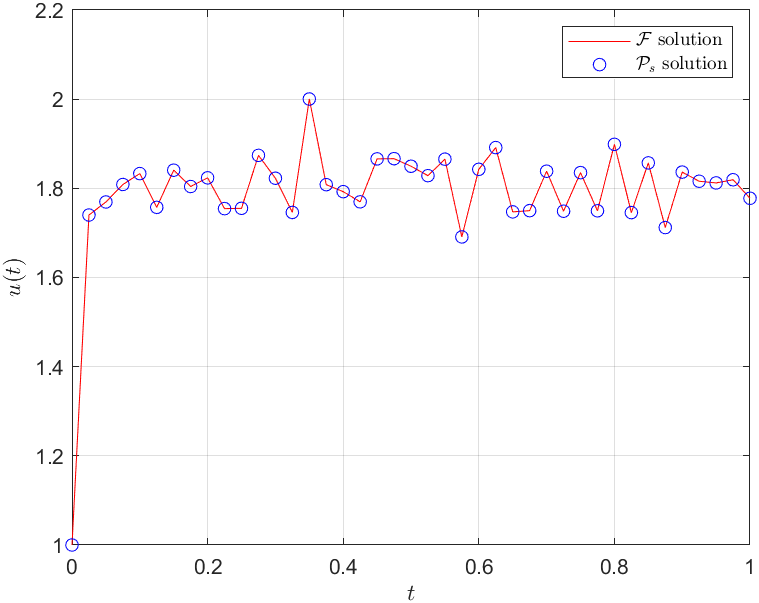}
     \label{Nolinear/Double-Well Potential PEM1}
    }
    \subfigure[Sampling Rules 1 and 3]{
    \includegraphics[width=0.3\textwidth]{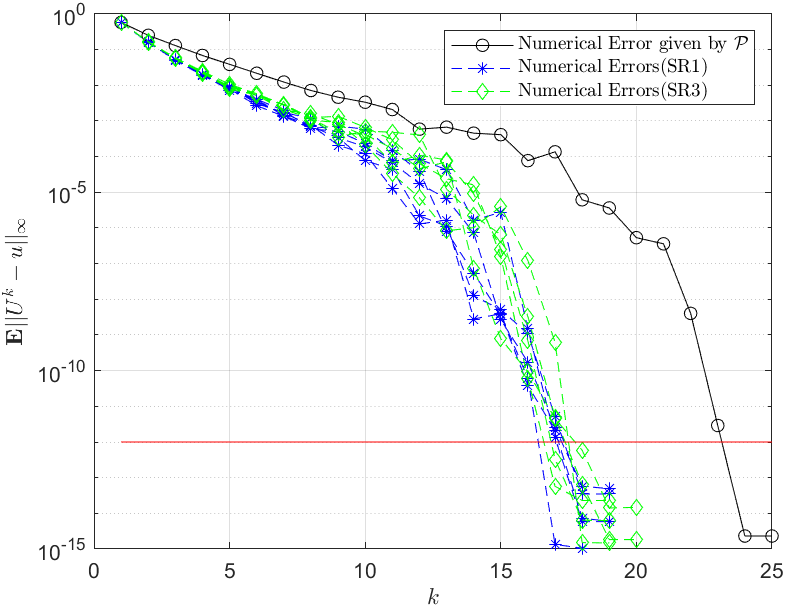}
     \label{Nolinear/Double-Well Potential PEM2}
    }
    \subfigure[Sampling Rules 2 and 4]{
    \includegraphics[width=0.3\textwidth]{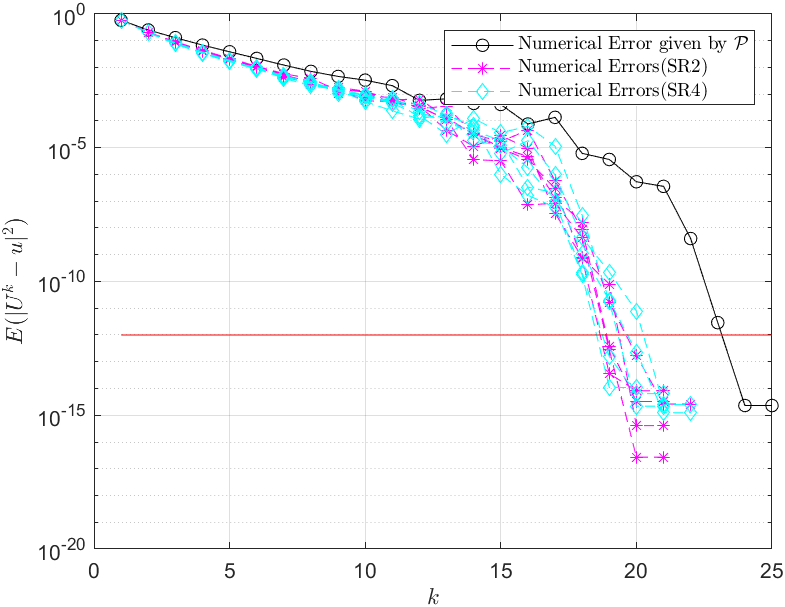}
    \label{Nolinear/Double-Well Potential PEM3}
    }
    \captionsetup{font={footnotesize}}
    \caption{
    \subref{Nolinear/Double-Well Potential PEM1} Numerical solutions obtained by $\mathcal{F}_{\Delta t}$ and $P_s$.
    \subref{Nolinear/Double-Well Potential PEM2} Comparsion of $P$, $P_s$ with sampling rules 1 and 3. 
    \subref{Nolinear/Double-Well Potential PEM3} Comparsion of $P$, $P_s$ with sampling rules 2 and 4. 
    }
    \label{Nolinear/Double-Well Potential}
    \end{figure}

\begin{example}
    \textbf{Population Dynamics Model} \cite{1999}\par
    The population dynamics in a natural environment can be modeled by the following stochastic differential equation:
    \begin{equation}
        dX(t) = r X(t) (K - X(t)) dt + \sigma X(t) dW(t),
    \end{equation}
    where $X(t)$ denotes the population density at time $t$. The constant $K > 0$ represents the environmental carrying capacity, while $r > 0$ is the intrinsic growth rate of the population. The exact solution of this system is given by:
    \begin{equation}\label{4.12}
        X(t) = \frac{X(0) e^{(rK - \frac{1}{2}\sigma^2)t + \sigma W(t)}}{1 + X(0)r \int_0^t e^{(rK - \frac{1}{2}\sigma^2)s + \sigma W(s)} ds}.
    \end{equation}
    From practical considerations, it is evident that population density cannot be negative. From the exact solution in \eqref{4.12}, it follows that if the initial condition satisfies $X(0) \geq 0$, then for any time $t > 0$, the solution also satisfies $X(t) \geq 0$.
\end{example}

In the numerical experiments, we set the initial condition $u(0) = 1$, with $N = 40$ and consider the time interval $T \in [0, 1]$. The coarse time step is chosen as $\Delta T = \frac{1}{40}$, and the fine time step as $\Delta t = \frac{1}{80}$. The remaining parameter values are set as $\theta_g = 0$, $\theta_f = 0$, $r = 0.5$, $K = 100$, and $\sigma = 0.05$.

Figure \ref{Nolinear/Population Dynamics Model1} demonstrates that the numerical solutions computed by $P_s$ and $\mathcal{F}_{\Delta t}$ are in close agreement. 
%Furthermore, Figures \ref{Nolinear/Population Dynamics Model3} and \ref{Nolinear/Population Dynamics Model4} show that $P_s$ reaches the stopping criterion $\varepsilon$ more quickly compared to $P$. 
Furthermore, Figures \ref{Nolinear/Population Dynamics Model3} and \ref{Nolinear/Population Dynamics Model4} illustrate that $P_s$ reaches the stopping criterion $\varepsilon$ faster than $P$. Specifically, for the same error tolerance, the number of iterations $k$ required by $P_s$ is evident fewer than that required by $P$.

    \begin{figure}[htbp]
     \subfigure[Numerical solutions]{
        \includegraphics[width=0.3\textwidth]{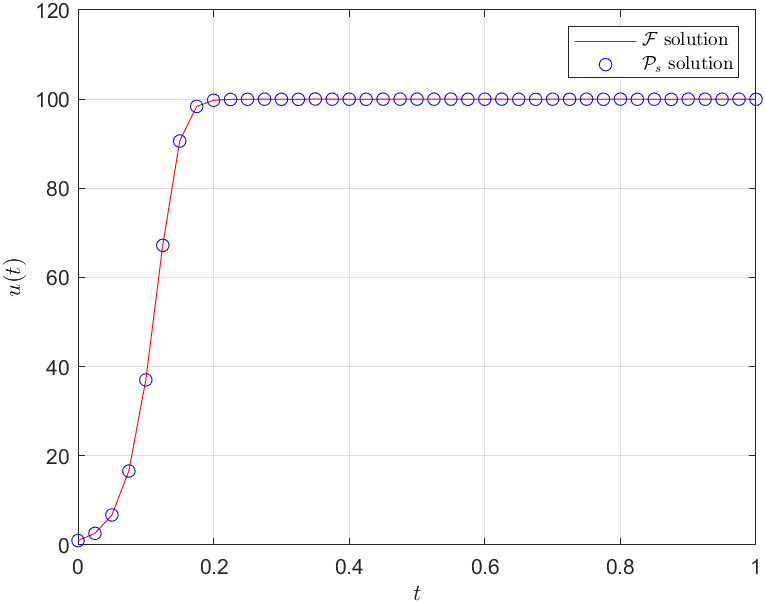}
        \label{Nolinear/Population Dynamics Model1}
    }
        \subfigure[Sampling Rules 1 and 3]{
        \includegraphics[width=0.3\textwidth]{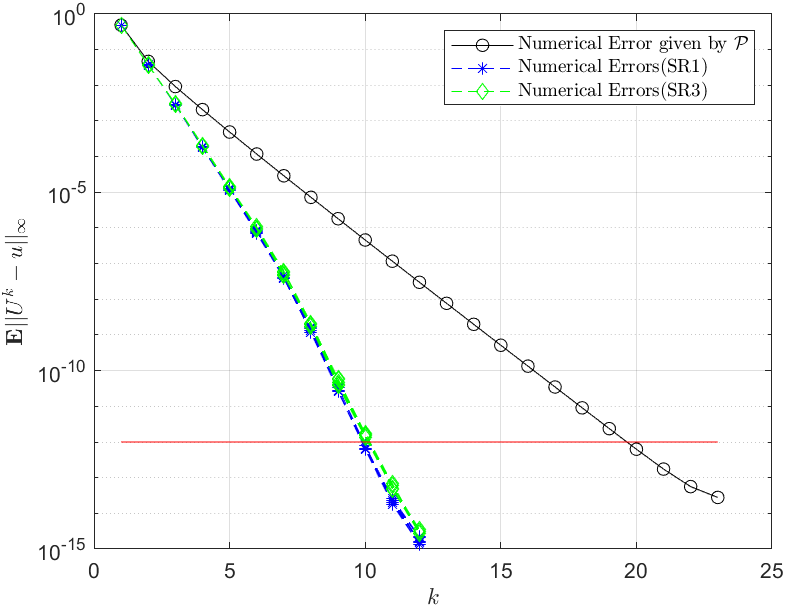}
        \label{Nolinear/Population Dynamics Model3}
    }
         \subfigure[Sampling Rules 2 and 4]{
         \includegraphics[width=0.3\textwidth]{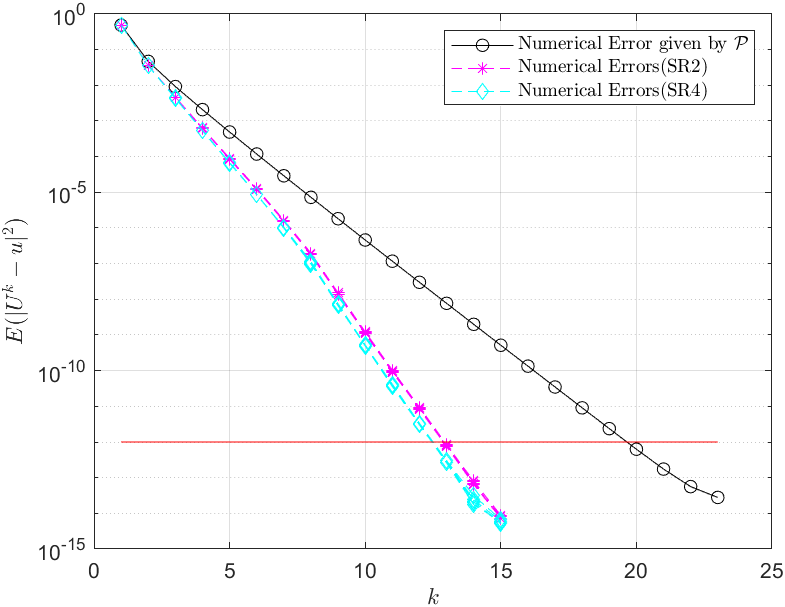}
       \label{Nolinear/Population Dynamics Model4}
    }
    \captionsetup{font={footnotesize}}
    \caption{
  \subref{Nolinear/Population Dynamics Model1} Numerical solutions obtained by $\mathcal{F}_{\Delta t}$ and $P_s$. \subref{Nolinear/Population Dynamics Model3} Comparsion of $P$, $P_s$ with sampling rules 1 and 3. 
  \subref{Nolinear/Population Dynamics Model4} Comparsion of $P$, $P_s$ with sampling rules 2 and 4. 
    \label{Nolinear/Population Dynamics Model}
    }
    \end{figure}

\section{Conclusions}

In this paper, we applied the SParareal algorithm to the Dahlquist test equation and analyzed its mean-square stability. Through theoretical analysis, we derived the conditions under which the algorithm maintains mean-square stability and established linear convergence bounds under different sampling rules. Similar to its performance for deterministic ordinary differential equations (ODEs), the SParareal algorithm demonstrates notable efficiency when applied to stochastic differential equations (SDEs). Compared to the classical Parareal algorithm, the SParareal algorithm exhibits faster convergence; specifically, for the same error tolerance, the number of iterations $k$ required by $P_s$ is evident fewer than that required by $P$. Numerical experiments further confirm the effectiveness of the SParareal algorithm for both linear and nonlinear SDEs.

Theoretically, the number of iterations $k$ required by the SParareal algorithm decreases as the number of samples $m$ increases. However, numerical results reveal that beyond a certain threshold of $m$, the reduction in the number of iterations $k$ becomes less apparent. Although the SParareal algorithm achieves improved efficiency with increasing $m$, it also demands greater computational resources. Therefore, as highlighted in \cite{2021Traffic}, balancing efficiency and computational cost by identifying an optimal value of $m$ remains a critical area for future research.   

Furthermore, based on the numerical observations in Example 3, the study of the convergence of the $P$ and $P_s$ algorithms for nonlinear stochastic differential equations, utilizing modified versions of the Euler method (such as the projected Euler method \cite{Beyn2016projectedEM}, the taming Euler method \cite{Hutzenthaler2012tamedEM,Gao2024tamedEM}, and the truncated Euler method \cite{Mao2015truncatedEM}), represents an important direction for future research.

%\section*{Acknowledgements}

\section*{Funding}
This work was supported by  National Natural Science Foundation of China (Nos. 12201586)  and China 
	Postdoctoral Science Foundation (Grant No. 2021M703008),  the Guangdong Provincial Natural Science Foundation of China (Grant No. 2023A1515010809), the Fundamental Research Funds for the Central Universities, China University of Geosciences (Wuhan, Grant numbers: CUG2106127 and CUGST2).

\section*{Data availability}
Data sharing not applicable to this article as no datasets were generated or analyzed during the current study.

\section*{Declaration}
\textbf{Ethical Approval} Not Applicable. \\
\textbf{Conflict of interest} The authors declare no competing interests.

\bibliography{sn-Ref}% common bib file
%% if required, the content of .bbl file can be included here once bbl is generated
%%\input sn-article.bbl

%% Default %%
%%\input sn-sample-bib.tex%

\end{document}